\documentclass[reqno]{amsart}
\usepackage[numbers,square,comma,sort&compress]{natbib}
\usepackage{amsmath,amssymb}
\usepackage{color}

\newcommand{\real}{\mathbb{R}}
\newcommand{\nat}{\mathbb{N}}

\newcommand{\oi}{(0,\infty)}

\newcommand{\rn}{\real^N}
\newcommand{\intrn}{\int_{\real^N}}

\newcommand{\ep}{\epsilon}
\newcommand{\de}{\delta}

\newcommand{\ffy}{\varphi_\infty}
\newcommand{\D}{\Delta}
\newcommand{\lam}{\lambda}
\newcommand{\ly}{\lambda_\infty}
\newcommand{\ls}{\lambda_*}

\newcommand{\e}{\mathrm{e}}

\newcommand{\diff}{\,\mathrm{d}}

\newcommand{\Ve}{\Vert}

\renewcommand\emptyset{\mbox{\Large \o}}
\newcommand{\sm}{\setminus}
\newcommand{\x}{\times}
\newcommand{\les}{\leqslant}
\newcommand{\ges}{\geqslant}

\newcommand{\calZ}{\mathcal{Z}}
\newcommand{\calS}{\mathcal{S}}
\newcommand{\calC}{\mathcal{C}}
\newcommand{\calZn}{\calZ_n}
\newcommand{\calCn}{\calC_n}
\newcommand{\calCo}{\calC_0}

\DeclareMathOperator \rge{rge}
\DeclareMathOperator \proj{P}

\newtheorem{thm}{Theorem}
\newtheorem{lem}[thm]{Lemma}
\newtheorem{prop}[thm]{Proposition}

\newtheorem{rem}{Remark}
\newtheorem{ex}{Example}


\begin{document}

\title[Asymptotically linear Schr\"odinger equations]
{Global bifurcation for asymptotically linear Schr\"odinger equations}
\author{Fran\c cois Genoud}

\thanks{This work was supported by the Engineering and Physical Sciences Research Council, [EP/H030514/1].}

\subjclass[2000]{Primary 35J61; Secondary 35B32}
\keywords{Asymptotically linear Schr\"odinger equations, 
Semilinear elliptic eigenvalue problems,
Global bifurcation, Unbounded domains}

\address{Department of Mathematics and the Maxwell Institute
for Mathematical Sciences \\ Heriot-Watt University \\
Edinburgh \\ EH14 4AS \\ Scotland.}
\email{F.Genoud@hw.ac.uk}

\begin{abstract}
We prove global asymptotic bifurcation for a very general class of 
asymptotically linear Schr\"odinger equations 
\begin{equation}\label{1}
\left\{ \begin{array}{lr}
\D u +  f(x,u)u = \lam u \quad \text{in} \ \rn,\\
u \in H^1(\rn)\sm\{0\}, \quad N \ges 1.
\end{array}\right.
\end{equation}
The method is topological, based on recent
developments of degree theory. We use the inversion $u\to v:= u/\Vert u\Vert_X^2$ in 
an appropriate Sobolev space $X=W^{2,p}(\rn)$, and
we first obtain bifurcation from the line of trivial solutions 
for an auxiliary problem in the variables $(\lambda,v) \in {\mathbb R} \x X$. 
This problem has a lack of compactness and of regularity, requiring
a truncation procedure. 
Going back to the original problem, we obtain global branches of 
positive/negative solutions `bifurcating from infinity'. 
We believe that, for the values of $\lambda$ covered
by our bifurcation approach, the existence result we obtain for
positive solutions of \eqref{1} is the most general so far
\end{abstract}

\setcounter{equation}{0}

\maketitle

\section{Introduction}

We consider nonlinear Schr\"odinger equations of the form
\begin{equation}\label{nls}
\left\{ \begin{array}{lr}
\D u +  f(x,u)u = \lam u \quad \text{in} \ \rn,\\
u \in H^1(\rn)\sm\{0\}, \quad N \ges 1,
\end{array}\right.
\end{equation} 
where the nonlinearity is supposed to be {\em asymptotically linear} in the sense that
there exists $f_\infty \in C(\rn)$ such that $f(x,s) \to f_\infty(x)$ as $|s| \to \infty$ 
for all $x \in \rn$. 
A solution to \eqref{nls} is a couple $(\lambda,u) \in \real \x H^1(\rn) \sm \{0\}$
satisfying the elliptic equation in the sense of distributions. 
We will establish global bifurcation for \eqref{nls} in $\real \x W^{2,p}(\rn)$ 
with $p \in [2,\infty) \x (\tfrac12 N,\infty)$, 
yielding solutions with $u\in H^1(\rn)$. More precisely, we will show that there exists 
unbounded connected sets $\calS^\pm\subset \real \x W^{2,p}(\rn)$ 
of positive/negative solutions of \eqref{nls}.
We use a method introduced in \cite{sz}, based on a topological degree for compact
perturbations of $C^1$ Fredholm maps. We have applied this method recently to 
problem \eqref{nls} in dimension $N=1$ -- actually, on the half-line -- \cite{g2}, 
by establishing global bifurcation in $H^2\oi$. The present work extends the results
of \cite{g2} in various directions, as explained in more detail below.

\medskip

To present the method and discuss our results, let us first rewrite \eqref{nls} as
\begin{equation}\label{rewrite}
\D u + f_\infty(x)u + g(x,u)u = \lam u \quad \text{with} \quad g(x,s) := f(x,s)-f_\infty(x).
\end{equation}
Following Rabinowitz \cite{r} and Toland \cite{t1}, 
we use the inversion $u\to v:= u/\Vert u\Vert^2$
(with $\Vert\cdot\Vert:=\Vert\cdot\Vert_{W^{2,p}(\rn)}$) to get
\begin{equation}\label{invert}
\D v + f_\infty(x)v + g(x,v/\Vert v\Vert^2)v = \lam v,
\end{equation}
for which we shall prove global bifurcation from the point $(\ly,0)$ in $\real \x W^{2,p}(\rn)$.
Here, $\ly>0$ is characterized as the principal eigenvalue of the linear problem
\begin{equation}\label{lin}
\D u + f_\infty(x)u = \lam u,
\end{equation}
known as the `asymptotic linearization' of \eqref{nls}.
Returning to the original variable will yield an unbounded connected set 
of solutions $(\lambda,u)$ of \eqref{nls}, satisfying
$$
\Vert u\Vert\to\infty \quad\text{as}\quad \lambda\to\lambda_\infty.
$$
This behaviour is often referred to as {\em asymptotic bifurcation} and was established
in \cite{t1,r} in the context of boundary value problems in bounded domains. 
The core of the method there is Rabinowitz's global bifurcation theory. The present
situation is more difficult because the domain is unbounded and problem \eqref{nls}
is not asymptotically linear in the rigorous sense -- which amounts to saying that the auxiliary problem \eqref{invert} is not Fr\'echet differentiable at $v=0$. 
Following Stuart and Zhou \cite{sz}, we therefore resort to a truncation procedure to transform 
\eqref{invert} into a problem for which global bifurcation can be obtained by applying an
abstract theorem proved in \cite{sz}, based on recent developments of degree theory \cite{rs}.

\medskip

Our precise hypotheses on the nonlinearity are the following.

\begin{itemize}
\item[(f1)] $f\in C(\rn\x\real) \cap L^\infty(\rn\x\real)$ and $f(x,\cdot)\in C^1(\real)$ for all $x\in\rn$.
\item[(f2)] We have $f(x,s)\to f_0(x) := f(x,0)$ as $s\to0$, uniformly for $x\in\rn$.
\item[(f3)] There exists a function $f_\infty\in C(\rn)$ such that 
$f(x,s)\to f_\infty(x)$ as $|s|\to\infty$, uniformly for $x\in\rn$.
Furthermore, there exists $\de>0$ such that
$$0 < f_0(x) + \de < \ls := \limsup_{|x|\to\infty}f_\infty(x) < \infty \quad\text{for all} \ x\in\rn.$$
\item[(f4)] $0\les f(x,s)\les f_\infty(x)$ for all $(x,s)\in\rn\x\real$.
\item[(f5)] $\partial_2 f(\cdot,s)\in L^\infty(\rn)$ for all $s\in\real$ and 
$\{\partial_2 f(x,\cdot)\}_{x\in\rn}$ is equicontinuous.
\item[(f6)] We have
$$
\lambda_\infty := 
- \inf_{u\in H^1(\rn)\sm\{0\}}
\frac{\intrn \{ |\nabla u|^2-f_\infty(x)u^2 \} \diff x}{\intrn u^2\diff x} >\ls.
$$
\end{itemize}

\begin{rem}\label{lim}\rm
\item[(i)] Note that the hypothesis (f3) prevents $f$ from being independent of $s$.
\item[(ii)] Assumption (f6) is satisfied provided $f_\infty \ \substack{\ges \\ \not\equiv} \ \ls$ 
on $\rn$, in which case the `$\limsup$' in (f3) is actually a `$\lim$': $\lim_{|x|\to\infty} f_\infty(x)=\ls$.
\end{rem}

\begin{thm}\label{thm} Let $f$ satisfy the hypotheses $(\mathrm{f}1)$ to $(\mathrm{f}6)$
and $p \in [2,\infty) \x (\tfrac12 N,\infty)$. There exist two connected sets 
$\calS^\pm \subset \real \x W^{2,p}(\rn)$ of positive/negative solutions of \eqref{nls}
with the following properties.
\item[$(\mathrm{i})$] $\proj\calS^\pm=(\ls,\ly)$, where
$\proj(\lam,u):=\lam$ for all $(\lam,u)\in \real \x W^{2,p}(\rn)$. 
\item[$(\mathrm{ii})$] $\calS^\pm$ are bounded away from $\real\x\{0\}$ in 
$\real\x W^{2,p}(\rn)$.
\item[$(\mathrm{iii})$] If $\{(\lam_n,u_n)\}\subset\calS^\pm$ is such that 
$\lam_n\to\lam$ as $n\to\infty$, then
$$
\lim_{n\to\infty} \Ve u_n \Ve_{L^p(\rn)}=
\lim_{n\to\infty} \Ve u_n \Ve_{L^\infty(\rn)}=\infty \ \Longleftrightarrow \ \lam=\ly.
$$
\item[$(\mathrm{iv})$] For all $(\lam,u)\in\calS^\pm$, we have
$u \in C^1(\rn)$ and $|\nabla u(x)|, u(x)\to0$ as $|x|\to\infty$.
\end{thm}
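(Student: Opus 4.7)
The plan is to carry out the program sketched in the introduction. Set $X:=W^{2,p}(\rn)$ and rewrite \eqref{nls} as \eqref{rewrite}; under the inversion $u=v/\Ve v\Ve_X^2$, solutions with $\Ve u\Ve\to\infty$ correspond to small solutions of \eqref{invert}, whose linearization at $v=0$ is \eqref{lin}. Thanks to (f3) and (f6), the principal eigenvalue $\ly$ of \eqref{lin} is simple and isolated from the continuous spectrum (since $\ly>\ls$), with a positive, exponentially decaying eigenfunction $\ffy$. Because $v\mapsto g(x,v/\Ve v\Ve^2)v$ is not Fr\'echet differentiable at $v=0$, I would introduce a $C^1$ cut-off $\chi_\eps:X\to X$ vanishing on $\{\Ve v\Ve\les\eps\}$ and equal to the identity on $\{\Ve v\Ve\ges 2\eps\}$, and replace the singular term by $G_\eps(v):=g\bigl(x,\chi_\eps(v)/\Ve\chi_\eps(v)\Ve^2\bigr)v$, extended by $0$ at $v=0$. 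The truncated equation $F_\eps(\lam,v):=(\D+f_\infty(x)-\lam)v+G_\eps(v)=0$ becomes a $C^1$ compact perturbation of a continuous family of index-zero Fredholm operators $X\to L^p(\rn)$, to which the abstract bifurcation theorem of \cite{sz} (built on the degree of \cite{rs}) applies.

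Simplicity of $\ly$ and positivity of $\ffy$ will yield an odd local crossing number, producing an unbounded connected component of nontrivial solutions of the truncated problem emanating from $(\ly,0)$. Near $(\ly,0)$ the cut-off is inactive, so the local branch consists of solutions of the untruncated equation \eqref{invert}, and a nodal argument splits it into two subcontinua $\calC^\pm$ of strictly one-signed solutions. Undoing the inversion, which is a homeomorphism of $X\sm\{0\}$ onto itself, maps $\calC^\pm$ to connected sets $\calS^\pm\subset\real\x X$ of positive/negative solutions of \eqref{nls} with $\Ve u\Ve\to\infty$ as $\lam\to\ly$. Positivity is preserved by the inversion, and can also be recovered from the strong maximum principle applied to $-\D u+(\lam-f(x,u))u=0$ using (f4).

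The four properties of the theorem are then verified as follows. For (iv), a standard elliptic bootstrap using \eqref{nls}, (f1) and iterated Sobolev embeddings raises the regularity from $W^{2,p}$ to $W^{2,q}\hookrightarrow C^1$ for some $q>N$, and decay of $u$ and $\nabla u$ at infinity follows from $u\in W^{2,q}(\rn)$ combined with the equation. For (i), pairing \eqref{nls} with $u$ in $L^2$ and using (f4) gives $\lam\les\ly$, with strict inequality on $\calS^\pm$ since $f(x,u)<f_\infty(x)$ on a set of positive measure whenever $u$ is bounded; the lower bound $\lam>\ls$ reflects the impossibility of $H^1$-bound states above the bottom of the essential spectrum of $-\D-f_\infty(x)$ (hence of $-\D-f(x,u)$, via $f\les f_\infty$). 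Property (ii) combines (i) with (f3): an accumulation point $(\lam,0)$ of $\calS^\pm$ would give an eigenfunction of $\D+f_0(x)$ with eigenvalue $\lam\les f_0(x)+\de<\ls$, contradicting (i). Finally, (iii) follows since $\calS^\pm$ is constructed as the inverted image of a branch at $v=0$; the equivalence of $L^p$- and $L^\infty$-blow-up uses the embedding $W^{2,p}\hookrightarrow L^\infty$ (after bootstrap) together with the $W^{2,p}$-estimate $\Ve u\Ve_{W^{2,p}}\les C\Ve u\Ve_{L^p}$ afforded by \eqref{nls} and (f1).

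The principal obstacle will be the first step: establishing the Fredholm-plus-compact structure of $F_\eps$ in the non-compact setting of $\rn$, and confirming that the truncation preserves both the bifurcation point $(\ly,0)$ and its odd crossing number. The spectral gap $\ly>\ls$ provided by (f6) is the decisive ingredient here, yielding the exponential decay required for compactness of $G_\eps$ and for the Lyapunov--Schmidt reduction that identifies the local degree.
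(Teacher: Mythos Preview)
Your outline follows the same inversion-and-truncation strategy as the paper, but the truncation you propose does not do what you need it to do. You cut off in the \emph{norm} direction, setting $G_\eps(v)=0$ for $\Ve v\Ve_X\les\eps$ and $G_\eps=G$ for $\Ve v\Ve_X\ges2\eps$. This does restore Fr\'echet differentiability at $v=0$, but it does \emph{not} make $G_\eps$ compact: for $\Ve v\Ve_X\ges2\eps$ you still have $G_\eps(v)=g(x,v/\Ve v\Ve_X^2)\,v$, which is essentially multiplication by a bounded function, and the embedding $W^{2,p}(\rn)\hookrightarrow L^p(\rn)$ is not compact. Concretely, translations $v_k(x)=v_0(x-ke_1)$ with $\Ve v_0\Ve_X\ges2\eps$ give $\{G_\eps(v_k)\}$ with no $L^p$-convergent subsequence. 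The exponential decay you invoke at the end is a property of \emph{solutions}, not of arbitrary $v\in X$, and cannot rescue compactness of the operator. Without compactness, Theorem~\ref{globif} does not apply. A secondary issue: your sentence ``near $(\ly,0)$ the cut-off is inactive'' is backwards --- your cut-off is \emph{active} precisely near $v=0$, so the local branch of your truncated problem is just the linear eigenspace $\{(\ly,t\ffy)\}$, not solutions of \eqref{invert}.

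The paper's remedy is to truncate \emph{spatially}: $G_n(v)(x):=\chi_n(x)G(v)(x)$ with $\chi_n=\mathbf{1}_{B_n}$. This yields compactness of $G_n$ via Rellich--Kondrachov on the bounded ball, and, less obviously, it also forces $G_n'(0)=0$ (split $B_n$ into $\{|v(x)|/\Ve v\Ve_X^2>M\}$, where $|g|$ is small by (f3), and its complement, which contributes $O(\Ve v\Ve_X^{2p}|B_n|)$ thanks to the finite measure). The price is that one now has a \emph{sequence} of truncated problems and must pass to the limit $n\to\infty$. This requires the uniform a~priori bound of Lemma~\ref{bounds} (the heart of the argument, resting on a maximum-principle contradiction), together with a Whyburn-type topological lemma to produce a connected limit set $\calC$. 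Only then does one remove the endpoint $(\ly,0)$ and invert to obtain $\calS^\pm$. Your outline skips this limiting step entirely because your truncation was meant to avoid it; once you replace it by the spatial truncation, this step becomes the substance of the proof.
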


\begin{ex}\rm
The function 
$$f(x,s) := \frac{1}{1+s}f_0(x) + \frac{s}{1+s}f_\infty(x)$$ satisfies (f1) to (f6) 
if $f_0,f_\infty \in C(\rn) \cap L^\infty(\rn)$ with $f_0,f_\infty\ges0, \ f_0\les f_\infty$, 
$f_\infty \ \substack{\ges \\ \not\equiv} \ \ls := \lim_{|x|\to\infty}f_\infty(x)$
and $f_0 + \de < \ls$ on $\rn$, for some $\de>0$.
\end{ex}

In our previous work on the one-dimensional case \cite{g2}, we had in mind applications to 
nonlinear waveguides and were therefore working under more restrictive assumptions.
In particular, we were dealing with a nonlinearity of the form $f(x,u^2)u$ with an
$f\in C(\real_+^2)$ satisfying similar assumptions to (f1)-(f6), but with $f_0 \equiv$ const.
As will be seen in the proof of Theorem~\ref{thm}, the nonlinearity $f(x,u)u$ in \eqref{nls}
can be handled similarly, without any symmetry assumption.
The restriction on $f_0$ was made in connection with our previous work on related problems
\cite{g1}, with a view to continuing the branch of solutions down to the line of trivial solutions,
where we expect it to meet the $\lam$\,-\,axis at the boundary of the essential spectrum of 
the linearization at $u=0$. 
The linearization at $u=0$ and the asymptotic linearization \eqref{lin} can respectively be written as
\begin{equation}\label{linear}
L_{0/\infty} u = \lam u,
\end{equation}
where $L_{0/\infty}: D(L_{0/\infty}):=H^2(\rn)\subset L^2(\rn) \to L^2(\rn)$ are
the Schr\"odinger operators\footnote{As shown in Proposition~\ref{eigen.prop} below,
we do not lose any generality by restricting the domain of these operators to $H^2(\rn)$.}
defined by
$$
L_{0/\infty} u := \D u + f_{0/\infty}(x)u.
$$
We refer the reader to \cite{trieste} for the spectral theory of Schr\"odinger operators we shall need here. 
In particular, under the above hypotheses, we have
$$
\ly=\sup\sigma(L_\infty), \ \sigma_\mathrm{e}(L_\infty) \subset (-\infty,\ls] 
\quad\text{and}\quad \sigma(L_0) \subset (-\infty,\ls),
$$
where $\sigma$ and $\sigma_\mathrm{e}$ denote 
the spectrum and the essential spectrum, respectively. 
(Note that $\sigma_\mathrm{e}(L_\infty)=(-\infty,\ls]$ if $f_\infty$ satisfies the assumption
in Remark~\ref{lim}(ii).)
In the present work, as in \cite{g2}, we make the hypothesis (f6), ensuring that $\ly$ is
an eigenvalue of \eqref{lin}, to obtain a branch of solutions `bifurcating from infinity'. 
The discussion regarding bifurcation from the line of trivial solutions relies on $L_0$, and
two different scenarios can occur, depending on whether 
$\sup\sigma_\mathrm{e}(L_0)<\sup\sigma(L_0)$
or $\sup\sigma_\mathrm{e}(L_0)=\sup\sigma(L_0)$. In the first case,
$\sup\sigma(L_0)$ is a simple eigenvalue, and the problem can be dealt with 
by standard bifurcation theory. The one-dimensional problem was
studied in great detail in \cite{js}. In the situation where 
$\sup\sigma_\mathrm{e}(L_0)=\sup\sigma(L_0)$ 
(which occurs for instance if $f_0 \equiv$ const.), 
we expect to observe bifurcation from the essential spectrum of $L_0$. 
This case deserves further attention and \cite{g2} can be seen as 
a first step in this direction.
However, the method used in \cite{g2} is very powerful and allows
one to deal with the present, much more general situation. 

Asymptotically linear Schr\"odinger equations have been extensively studied 
for the past 15 years, see e.g. \cite{sz96,z,sz99,j,jt,lz,ct,v,vw,lw,sz05,sz,zz,g2}. 
Most of these papers make use of variational methods to prove existence of solutions 
of various asymptotically linear problems, many of which can be put in the form \eqref{nls}.
Important developments were achieved in order to apply 
the mountain pass theorem without the so-called `superquadraticity condition', 
which is not satisfied by asymptotically linear nonlinearities. 
Two landmark contributions in this direction were \cite{sz99,j}, 
but many others followed.
In this context, fairly strong assumptions are usually made, 
requiring for instance radial symmetry, i.e. $f(x,s)=f(|x|,s)$, or simply
that $f$ does not depend on $x$, or that at least one of the 
linear problems \eqref{linear} is autonomous,
i.e. $f_0$ or $f_\infty$ is constant.
To ensure the compactness of minimizing sequences, 
the concentration-compactness principle is often used, 
requiring the existence of a limit problem as $|x|\to\infty$. 
At least, limits of $f_{0/\infty}$ as $|x|\to\infty$ are usually supposed to exist. 
In this respect, our work is probably most closely related to \cite{zz}, 
where both linearizations are non-autonomous (see also \cite{sz05} for a one-dimensional problem).
Typical results provide existence of positive
solutions obtained as critical points of an appropriate functional, for any given 
$\lam \in (\sup\sigma(L_0),\sup\sigma(L_\infty))$. 
As far as we know, the only bifurcation-type 
results were obtained in \cite{sz} for a problem with a potential well, where the parameter
$\lam$ appears with a weight.
Our method, derived from \cite{sz},
does not allow $\lam$ to go below $\ls$ since the Fredholm property needs
to be preserved along the branch (see Lemma~\ref{fredhops}), and so we do not cover
the full interval $(\sup\sigma(L_0),\sup\sigma(L_\infty))$.
However, for $\lam\in(\ls,\ly)$, we believe that our results are the most general so far. 
In particular, we do not need any symmetry or monotonicity assumptions, and
$f_{0/\infty}$ need not have limits as $|x|\to\infty$. 

\medskip

\noindent{\bf Notation and terminology.} 
Throughout the paper, the usual norm on $L^r(\rn)$ for $1\les r\les\infty$ 
will be denoted by $|\cdot|_r$. 
Completely continuous nonlinear mappings between Banach spaces will be
termed `compact'.


\section{Preliminary results}\label{prel}

We first establish some basic properties of solutions of \eqref{nls}.

\begin{prop}\label{reg} Suppose that $(\mathrm{f}1)$ to $(\mathrm{f}6)$ are satisfied.
\item[$(\mathrm{i})$] If $(\lam,u) \in \real \x H^1(\rn)$ satisfies \eqref{nls} 
then $u \in W^{2,p}(\rn)$ for all $p \in [2,\infty)$. 
In particular, $u \in C^1(\rn)$ with $|\nabla u(x)|, u(x)\to0$ as $|x|\to\infty$.
\item[$(\mathrm{ii})$] If $(\lam,u) \in \real \x W^{2,p}(\rn)$ satisfies \eqref{nls} 
with $\lam>\ls$ and $p \in [2,\infty) \x (\tfrac12 N,\infty)$, then $u \in H^1(\rn)$.
\end{prop}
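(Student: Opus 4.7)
\emph{Proof plan.} Part~(i) follows from a standard elliptic bootstrap. Rewrite~\eqref{nls} as $-\D u = (f(x,u)-\lam)u$; since $f$ is bounded by (f1), the right-hand side lies in $L^r(\rn)$ whenever $u$ does. Starting from $u\in H^1(\rn)$, which embeds in some $L^{r_0}(\rn)$ with $r_0>2$ by Sobolev, Calder\'on--Zygmund $L^p$ estimates give $u\in W^{2,r_0}(\rn)$, and iterating this together with Sobolev embedding produces $u\in W^{2,p}(\rn)$ for every $p\in[2,\infty)$. Choosing $p>N$, Morrey's embedding yields $u\in C^{1,\alpha}(\rn)$, and the decay $|\nabla u(x)|, u(x)\to 0$ as $|x|\to\infty$ follows from the density of $C_c^\infty(\rn)$ in $W^{2,p}(\rn)$ together with the continuous embedding $W^{2,p}(\rn)\hookrightarrow C^1_b(\rn)$, so that $u$ is the uniform limit of compactly supported $C^1$ functions.

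For part~(ii), the crucial step is to deduce $u\in L^2(\rn)$ from $u\in W^{2,p}(\rn)$ alone, since $p\ges2$ alone does not imply $L^2$-integrability. The idea is to exploit the strict positivity of $\lam-f(x,u(x))$ at infinity, which requires the assumption $\lam>\ls$. First, the Morrey embedding $W^{2,p}(\rn)\hookrightarrow C^0_0(\rn)$ for $p>\tfrac12 N$ gives that $u$ is continuous and $u(x)\to0$ as $|x|\to\infty$. Combining (f2) (hence $f(x,u(x))\to f_0(x)$ uniformly as $|x|\to\infty$) with $f_0(x)+\de\les\ls$ from (f3) and $\lam>\ls$, one finds $\mu>0$ and $R_0>0$ such that
$$
\lam - f(x,u(x)) \ges \mu \quad\text{for all} \ |x|\ges R_0.
$$
Rewriting~\eqref{nls} as $-\D u+(\lam-f(x,u))u=0$ and invoking Kato's inequality yields, distributionally on $\{|x|>R_0\}$,
$$
\D|u| \ges \sgn(u)\,\D u = (\lam-f(x,u))|u| \ges \mu|u|.
$$
Next, compare $|u|$ with the exponential supersolution $W(x)=C\e^{-\nu(|x|-R_1)}$ of $-\D w+\mu w=0$, which is a strict supersolution on $\{|x|>R_1\}$ for some $R_1\ges R_0$, provided $0<\nu<\sqrt{\mu}$; choose $C$ so that $W\ges|u|$ on $\{|x|=R_1\}$. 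Since $|u|,W\to 0$ at infinity, the maximum principle gives $|u(x)|\les C\e^{-\nu(|x|-R_1)}$ for $|x|\ges R_1$. In particular $u\in L^2(\rn)$, and exponential decay of $\nabla u$ follows from the equation combined with interior $W^{2,p}$ estimates on unit balls, giving $\nabla u\in L^2(\rn)$ and thus $u\in H^1(\rn)$.

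The main obstacle is the exponential-decay step in~(ii): one must combine (f2), (f3) and $\lam>\ls$ with the uniform decay of $u$ (from the Morrey embedding) to secure a uniform positive lower bound for $\lam-f(x,u(x))$ at infinity, and then justify Kato's inequality distributionally together with the maximum-principle comparison in the unbounded exterior domain (making sure the exponential weight $\nu$ is taken strictly less than $\sqrt{\mu}$ and that the boundary values on $\{|x|=R_1\}$ are under control). Once this is in place, all remaining conclusions of both parts follow from standard elliptic regularity.
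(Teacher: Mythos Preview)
Your argument is correct. Part~(i) is exactly the standard elliptic bootstrap the paper invokes in one line. For part~(ii), both you and the paper reach $u\in L^2(\rn)$ through an exponential-decay estimate and then conclude $u\in H^2(\rn)\subset H^1(\rn)$ by $L^2$-elliptic regularity; the difference is in how the coercivity at infinity is obtained. The paper appeals to a separate decay lemma (proved later and reused for the truncated problems) whose hypothesis $u\,L(\lam)u\ges0$ follows from $(\mathrm{f}4)$ via $f(x,u)\les f_\infty(x)$, and whose proof uses only the $\limsup$ clause of $(\mathrm{f}3)$. You instead use the Morrey decay $u(x)\to0$, then $(\mathrm{f}2)$ together with the inequality $f_0+\de<\ls$ in $(\mathrm{f}3)$, to bound $f(x,u(x))$ directly, never invoking $(\mathrm{f}4)$. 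The paper's route is more modular (the lemma is needed again); yours is self-contained and shows $(\mathrm{f}4)$ is not essential for this proposition. Your final step establishing $\nabla u\in L^2$ via interior estimates on unit balls is valid but unnecessary: once $u\in L^2(\rn)$, boundedness of $f$ puts the right-hand side in $L^2(\rn)$ and global regularity gives $u\in H^2(\rn)$ immediately.
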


\begin{proof} If $(\lam,u)$ is as in (i), $u \in W^{2,p}(\rn)$ for all $p \in [2,\infty)$ 
follows by standard elliptic regularity theory (see e.g. \cite{trieste}), 
using the fact that $f$ is bounded.

If $(\lam,u)$ is as in (ii), it follows by Lemma~\ref{expdecay} below that $u \in L^2(\rn)$ 
and so $u \in H^2(\rn)\subset H^1(\rn)$ by elliptic regularity.
\end{proof}

We now give some properties of the linear eigenvalue problem
\begin{equation}\label{eigen}
\begin{cases}
\D u + f_\infty(x)u = \lam u \quad\text{in} \ \rn,
\\
u \in H^1(\rn)\sm\{0\}, \quad \lam>0.
\end{cases}
\end{equation}
A number $\lam>0$ is called an {\em eigenvalue} if there exists a function $u \in H^1(\rn)\sm\{0\}$,
called an {\em eigenfunction}, such that $(\lam,u)$ satisfies \eqref{eigen} (in the sense
of distributions).

\begin{prop}\label{eigen.prop} Suppose that $(\mathrm{f}3)$ and 
$(\mathrm{f}6)$ are satisfied.
\item[$(\mathrm{i})$] If $(\lam,u) \in \real \x H^1(\rn)$ satisfies \eqref{eigen} 
then $u \in W^{2,p}(\rn)$ for all $p \in [2,\infty)$. 
In particular, $u \in C^1(\rn)$ with $|\nabla u(x)|, u(x)\to0$ as $|x|\to\infty$.
\item[$(\mathrm{ii})$] The number $\ly$ defined in $(\mathrm{f}6)$ is a simple eigenvalue of 
\eqref{eigen} and there exists a corresponding eigenfunction $\ffy \in H^1(\rn)$ with
$|\ffy|_2=1$ and $\ffy>0$ on $\rn$.
\end{prop}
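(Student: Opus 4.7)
The plan is as follows. For part $(\mathrm{i})$ I would repeat the argument of Proposition~\ref{reg}$(\mathrm{i})$: hypotheses $(\mathrm{f}1)$ and $(\mathrm{f}3)$ ensure $f_\infty\in L^\infty(\rn)\cap C(\rn)$, so rewriting the equation as $\D u=(\lam-f_\infty(x))u$ and bootstrapping via standard $L^p$ elliptic regularity yields $u\in W^{2,p}(\rn)$ for every $p\in[2,\infty)$. Fixing $p>N$ and appealing to Sobolev embedding gives $u,\nabla u\in C(\rn)$; since these functions are uniformly continuous and lie in $L^p(\rn)$, they must decay to $0$ as $|x|\to\infty$.

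For part $(\mathrm{ii})$ the strategy is the direct method applied to the Rayleigh quotient in $(\mathrm{f}6)$. Let $(u_n)\subset H^1(\rn)$ be a minimizing sequence with $|u_n|_2=1$; since $f_\infty\in L^\infty(\rn)$, the energy $E(u):=\intrn\{|\nabla u|^2-f_\infty u^2\}\diff x$ controls $|\nabla u_n|_2^2$ up to a constant, and $(u_n)$ is bounded in $H^1(\rn)$. After extraction, $u_n\wto\ffy$ weakly in $H^1(\rn)$ and strongly in $L^2_{\mathrm{loc}}(\rn)$. The main obstacle is the lack of compactness: a priori $(u_n)$ could lose mass at infinity. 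Setting $v_n=u_n-\ffy$, weak convergence in $H^1$ together with $f_\infty\ffy\in L^2(\rn)$ yields the splittings
\begin{equation*}
E(u_n)=E(\ffy)+E(v_n)+o(1),\qquad 1=|\ffy|_2^2+|v_n|_2^2+o(1).
\end{equation*}
Given $\eps>0$, hypothesis $(\mathrm{f}3)$ supplies $R=R(\eps)$ with $f_\infty\les\ls+\eps$ outside $B_R$; combined with $v_n\to0$ in $L^2(B_R)$, this gives $\limsup_n\intrn f_\infty v_n^2\diff x\les(\ls+\eps)(1-|\ffy|_2^2)$, and hence $\liminf_n E(v_n)\ges-\ls(1-|\ffy|_2^2)$. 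Coupling this with $E(\ffy)\ges-\ly|\ffy|_2^2$ (from the variational definition of $\ly$) and $E(u_n)\to-\ly$ yields $(\ly-\ls)(1-|\ffy|_2^2)\les0$; the gap $\ly>\ls$ in $(\mathrm{f}6)$ then forces $|\ffy|_2=1$. Weak lower semicontinuity of $|\nabla\cdot|_2$ finally gives $E(\ffy)=-\ly$, so $\ffy$ achieves the infimum.

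Since $\ffy$ is a minimizer, it satisfies the Euler--Lagrange equation, which is precisely \eqref{eigen} with $\lam=\ly$; thus $\ly$ is an eigenvalue. Because $E(|\ffy|)=E(\ffy)$, we may replace $\ffy$ by $|\ffy|$ and assume $\ffy\ges0$; part $(\mathrm{i})$ then gives $\ffy\in C(\rn)$, and the strong maximum principle applied to $-\D\ffy=(f_\infty-\ly)\ffy$ yields $\ffy>0$ on $\rn$. For simplicity, if $\ffy_1,\ffy_2$ were two linearly independent eigenfunctions at $\ly$, a suitable combination $w=\alpha\ffy_1+\beta\ffy_2$ would be sign-changing; but $|w|$ would again be a non-negative minimizer and hence strictly positive by the strong maximum principle, contradicting the existence of zeros.
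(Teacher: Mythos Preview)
Your proof is correct. The paper itself gives essentially no argument here: for (i) it says only ``follows by elliptic regularity'', and for (ii) it invokes ``the spectral theory of Schr\"odinger operators'' with a reference to \cite{trieste}. You have instead supplied a self-contained proof; in particular, for (ii) you carry out the direct method with a Br\'ezis--Lieb type splitting $E(u_n)=E(\ffy)+E(v_n)+o(1)$ and use the gap $\ly>\ls$ from (f6) to exclude loss of mass at infinity, then deduce positivity and simplicity from the strong maximum principle. This is precisely the route by which such statements are proved in the Schr\"odinger-operator literature, so your argument is really an unpacking of what the citation contains rather than a genuinely different method; what it buys is that the role of the hypothesis $\ly>\ls$ is made completely explicit. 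One minor remark: the proposition is stated under (f3) and (f6) alone, which give $f_\infty$ continuous and bounded \emph{above} but not obviously bounded below; you appeal to (f1) to get $f_\infty\in L^\infty(\rn)$, and this (or (f4), which gives $f_\infty\ges0$) is indeed needed, for instance to ensure $f_\infty\ffy\in L^2(\rn)$ in the splitting step. Since (f1)--(f6) are standing assumptions throughout the paper this is harmless, but strictly speaking (f3)+(f6) alone do not quite suffice.
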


\begin{proof} (i) follows by elliptic regularity. As discussed in the introduction,
(ii) follows from the spectral theory of Schr\"odinger operators (see e.g. \cite{trieste}).
\end{proof}


\section{Truncation and bifurcation}\label{bif}

For $p \in [2,\infty) \x (\tfrac12 N,\infty)$ fixed, we define the following Banach spaces:
\begin{alignat*}{10}
X&:=W^{2,p}(\rn) &\quad\text{with}\quad \Vert\cdot\Vert_X&:=\Vert\cdot\Vert_{W^{2,p}},\\
Y&:=L^p(\rn) &\quad\text{with}\quad \Vert\cdot\Vert_Y&:=|\cdot|_p.
\end{alignat*}
We start by giving an operator formulation of the inverted problem \eqref{invert}. 
For $\lambda\in\real$, we define $L(\lambda), G: X\to Y$ by
$$
L(\lambda)v := \D v + f_\infty(x)v - \lambda v \quad\text{and}\quad 
G(v) := g(x,v/\Vert v\Vert_X^2) v \quad  \text{for} \ v\neq0, \quad G(0) := 0.
$$
Here, as usual, $u\to g(x,u)$ denotes the Nemytskii operator generated by $g$. 
Under the hypotheses of Theorem~\ref{thm}, $G$ is well-defined and continuous.
Then \eqref{invert} is equivalent to 
\begin{equation}\label{opequ}
L(\lambda)v+G(v)=0.
\end{equation}
The following global bifurcation theorem is due to Stuart and Zhou.
For arbitrary Banach spaces $X$ and $Y$,
we denote by $B(X,Y)$ the space of bounded linear operators from $X$ to $Y$, and we let
$$
\Phi_0(X,Y) := \{L\in B(X,Y): L \ \text{is a Fredholm operator of index zero}\}.
$$

\begin{thm}\label{globif} {\rm \cite[Theorem~A.1]{sz}} 
Let $L\in C^1(J,\Phi_0(X,Y))$ where $J\subset\real$ is an open interval and $\lam_0\in J$ 
be such that 
$$
\dim\ker L(\lam_0) \ \text{is odd} \quad\text{and}\quad
[L'(\lam_0)\ker L(\lam_0)]\oplus \rge L(\lam_0)=Y. 
$$
Suppose that $K\in C(X,Y)$ is compact and Fr\'echet differentiable at $u=0$ with 
$K'(0)=0$.
Let 
$$\calZ := \{(\lam,u)\in J\x X: u\neq0 \ \text{and} \ L(\lam)u+K(u)=0\},$$ 
endowed with the metric inherited from $\real\x X$, and let $\calC$ denote 
the connected component of $\calZ\cup\{(\lam_0,0)\}$ such that $(\lam_0,0)\in\calC$. 
Then $\calC$ has at least one of the following properties$\,:$
\item[(a)] $\calC$ is an unbounded subset of $\real\x X;$
\item[(b)] $\overline{\calC}\cap[J\x\{0\}]\neq\{(\lam_0,0)\}$, where 
$\overline{\calC}$ denotes the closure of $\calC$ in $\real\x X;$
\item[(c)] either $\inf\proj\calC=\inf J$ or $\sup\proj\calC=\sup J.$
\end{thm}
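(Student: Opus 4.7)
The plan is to follow the classical Rabinowitz dichotomy, adapted to the degree theory for Fredholm maps of index zero perturbed by compact operators developed in \cite{rs}. Set $F(\lam,u) := L(\lam)u + K(u)$. Because $L(\lam)\in\Phi_0(X,Y)$ and $K$ is compact, each slice $F(\lam,\cdot)$ admits a degree $\deg(F(\lam,\cdot),\Omega,0)$ on every bounded open $\Omega\subset X$ whose boundary contains no solutions, with the usual homotopy invariance, excision and normalization properties. Since $K$ is Fr\'echet differentiable at $0$ with $K'(0)=0$, one has $F_u'(\lam,0) = L(\lam)$, which is an isomorphism whenever $\lam$ lies outside the set $\Sigma := \{\mu\in J:\ker L(\mu)\neq\{0\}\}$. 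For such $\lam$ the trivial solution is isolated and the local index $i(\lam) := \deg(F(\lam,\cdot),B_\rho(0),0) \in \{-1,+1\}$ is well-defined for small $\rho>0$.

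The spectral input is that $i(\lam)$ jumps as $\lam$ crosses $\lam_0$. The two conditions that $\dim\ker L(\lam_0)$ is odd and $[L'(\lam_0)\ker L(\lam_0)]\oplus \rge L(\lam_0) = Y$ are precisely those that force the parity of the path $\lam\mapsto L(\lam)$ at $\lam_0$ to equal $-1$; this is the abstract formulation of the classical odd-crossing-number hypothesis. I would verify this by a Lyapunov--Schmidt decomposition $X=\ker L(\lam_0)\oplus X_1$ and $Y=[L'(\lam_0)\ker L(\lam_0)]\oplus\rge L(\lam_0)$: on $X_1$, $L(\lam)$ is a small perturbation of an isomorphism, while on $\ker L(\lam_0)$ it reduces to a square matrix whose determinant vanishes to odd order at $\lam_0$ by the transversality condition. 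This yields $i(\lam_0-\eps) = -i(\lam_0+\eps)$ for $\eps>0$ small enough.

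With the degree and the jump in hand, I would argue by contradiction: assume that $\calC$ violates all of (a), (b), (c). Then $\calC$ is a compact subset of $J\x X$, bounded away from $J\x\{0\}$ except at $(\lam_0,0)$, and its projection onto $J$ is contained in a compact subinterval $[a_0,b_0]\subset J$. A Whyburn-type separation lemma applied to the closed solution set $\calZ\cup\{(\lam_0,0)\}$ then provides a bounded open neighbourhood $\Omega\subset J\x X$ of $\calC$ whose boundary meets no solutions and whose intersection with $J\x\{0\}$ is an arbitrarily small neighbourhood of $(\lam_0,0)$. Writing $\Omega_\lam := \{u\in X:(\lam,u)\in\Omega\}$ and choosing $a<\lam_0<b$ both close to $\lam_0$ and outside $\Sigma$, homotopy invariance of the degree in the parameter gives $\deg(F(a,\cdot),\Omega_a,0) = \deg(F(b,\cdot),\Omega_b,0)$, while the slices $\Omega_a,\Omega_b$ reduce to small balls around $u=0$ so that the two degrees become $i(a)$ and $i(b)$. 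Since these differ by the jump argument, we reach a contradiction, forcing at least one of (a), (b), (c).

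The main obstacle will be the parity/jump step: ensuring that the Rabier--Salter degree is consistently defined for the (merely continuous) compact perturbation $L+K$ on neighbourhoods of the branch, and rigorously translating the transversality hypothesis into a genuine sign change of $i(\lam)$ across $\lam_0$. The separation lemma and the parametric homotopy invariance are then standard once the degree-theoretic framework is in place.
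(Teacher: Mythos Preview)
The paper does not give its own proof of this theorem: it is quoted verbatim from Stuart and Zhou \cite[Theorem~A.1]{sz} and used as a black box, so there is nothing in the present paper to compare your argument against. Your outline is in fact a faithful sketch of how the result is established in \cite{sz}, which in turn relies on the Rabier--Salter degree \cite{rs} and the Fitzpatrick--Pejsachowicz--Rabier parity; the transversality hypothesis $[L'(\lam_0)\ker L(\lam_0)]\oplus \rge L(\lam_0)=Y$ combined with $\dim\ker L(\lam_0)$ odd is exactly the condition that forces the parity across $\lam_0$ to be $-1$, and the remaining contradiction argument via Whyburn separation is standard.

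One small caution on your write-up: the index $i(\lam)$ in this framework is not canonically an element of $\{-1,+1\}$ without first fixing an orientation of the Fredholm family (this is precisely the role of the parity), and properness of $u\mapsto L(\lam)u$ on closed bounded sets---required for the Rabier--Salter degree---should be checked explicitly (it follows from $L(\lam)\in\Phi_0(X,Y)$ by decomposing along $\ker L(\lam)$ and its complement). These are routine but worth stating if you intend a self-contained proof rather than a citation.
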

In the present context, we would like to obtain informations about connected sets of solutions of
problem \eqref{opequ} by applying Theorem~\ref{globif} with $\lam_0=\ly$, $K=G$,
$X, \ Y$ and $L(\lam)$ as defined above.
Unfortunately, this cannot be done directly because
$G$ is not differentiable at $v=0$ (this can be proved similarly to \cite[Lemma~B.1]{sz}) 
and is not compact. 
Therefore, we introduce the following sequence of approximate problems. We define
$G_n:X\to Y$ by 
\begin{equation}\label{Gn}
G_n(v)(x) := \chi_n(x)G(v)(x) \quad \text{for} \ v\in X \ \text{and} \ n\in\nat := \{1,2,\dots\},
\end{equation}
where
$$
\chi_n(x) := 
\begin{cases}
1  & \text{if } \  |x|\les n,
\\
0  & \text{if } \ |x|>n.
\end{cases}
$$
Since the mapping $\lam\to L(\lam)$ is of class $C^1(\real,B(X,Y))$, 
the following lemma shows that $L(\lam)$ satisfies the hypotheses of Theorem~\ref{globif},
with $J=(\ls,\infty)$ and $\lam_0=\lam_\infty$.

\begin{lem}\label{fredhops}
Suppose that $(\mathrm{f}3)$ and $(\mathrm{f}6)$ are satisfied and let $J=(\ls,\infty)$.
\item[$(\mathrm{i})$] $L(\lambda)\in \Phi_0(X,Y)$ for all $\lambda\in J$.
\item[$(\mathrm{ii})$] For $\lambda=\ly\in J$, we have
$$\dim\ker L(\ly)=1 \quad\text{and}\quad [L'(\ly)\ker L(\ly)]\oplus \rge L(\ly)=Y.$$
\end{lem}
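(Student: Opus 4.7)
The plan is to prove (i) by a Fredholm perturbation argument and to derive (ii) by combining the $L^2$-spectral information from Proposition~\ref{eigen.prop} with a standard transversality computation.

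For (i), I would fix $\lam\in J$, pick $\eps\in(0,\lam-\ls)$, and use (f3) to select $R>0$ with $f_\infty(x)<\ls+\eps/2$ whenever $|x|\ges R$. Decomposing
\[
L(\lam)=T_1+T_2, \qquad T_1 v:=\D v+[f_\infty(1-\chi_R)-\lam]v, \qquad T_2 v:=f_\infty\chi_R v,
\]
the operator $T_2\colon X\to Y$ is compact: its image is supported in $\overline{B_R}$, on which $W^{2,p}\hookrightarrow L^p$ compactly by Rellich-Kondrachov, and $f_\infty\in L^\infty(\rn)$. Setting $V:=\lam-f_\infty(1-\chi_R)$, one has $V\in L^\infty(\rn)$ with $V\ges\de_0:=\min\{\lam,\lam-\ls-\eps/2\}>0$, so $-T_1=-\D+V$ is a Schr\"odinger operator with a bounded, uniformly positive potential. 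I would argue it is an isomorphism $X\to Y$: injectivity by testing $-\D v+Vv=0$ against $|v|^{p-2}v$ (the necessary decay at infinity being granted by $W^{2,p}\hookrightarrow C_0(\rn)$ since $p>\tfrac12 N$), and surjectivity via $L^2$-invertibility of $-\D+V$ (Lax-Milgram, using $V\ges\de_0$) followed by $L^p$-elliptic regularity applied to $-\D u=f-Vu$. Since $\Phi_0(X,Y)$ is stable under compact perturbations, (i) follows.

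For (ii), Proposition~\ref{eigen.prop} furnishes $\ffy\in W^{2,p}(\rn)$ with $\ffy>0$, $|\ffy|_2=1$ and $L(\ly)\ffy=0$, and tells us that $\ly$ is a simple eigenvalue of the self-adjoint realization of $L_\infty$ on $L^2(\rn)$. Conversely, any $v\in X$ with $L(\ly)v=0$ lies in $L^2(\rn)$ by the exponential-decay result (Lemma~\ref{expdecay}), hence in $H^2(\rn)$ by elliptic regularity, and so in $\real\ffy$ by the $L^2$-simplicity of $\ly$; this gives $\dim\ker L(\ly)=1$. Since $L'(\lam)\equiv-I$, the required transversality condition $[L'(\ly)\ker L(\ly)]\oplus\rge L(\ly)=Y$ reads $\real\ffy\oplus\rge L(\ly)=Y$ and, by (i), reduces to $\ffy\notin\rge L(\ly)$. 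Assuming on the contrary that $L(\ly)w=\ffy$ for some $w\in X$, the exponential decay of $\ffy$ and $\nabla\ffy$, together with $w\in W^{2,p}\hookrightarrow L^\infty(\rn)$ (from $p>\tfrac12 N$), justify integration by parts and yield
\[
1=\intrn\ffy^2\diff x=\intrn\ffy\,L(\ly)w\diff x=\intrn w\,L(\ly)\ffy\diff x=0,
\]
a contradiction.

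The main obstacle I foresee is the global bijectivity $-\D+V\colon W^{2,p}(\rn)\to L^p(\rn)$ in part (i): while the $L^2$-case is immediate from positivity and self-adjointness, for $p>2$ one must carefully combine $L^p$-regularity for the Laplacian with the uniform positivity $V\ges\de_0>0$ both to exclude a non-trivial $W^{2,p}$-kernel and to produce an $L^p$-solution of $-\D u+Vu=f$ for arbitrary $f\in L^p(\rn)$. The rest of the argument is the classical Lyapunov-Schmidt transversality check for a simple eigenvalue of a self-adjoint operator.
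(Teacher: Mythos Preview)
Your argument is correct and follows the same underlying strategy as the paper, which merely cites \cite[Section~4]{jls} for (i) and invokes ``self-adjointness'' together with Proposition~\ref{eigen.prop} for (ii); you have essentially unpacked those references into an explicit compact-perturbation argument and a direct transversality check. Two minor remarks: first, your appeal to Lemma~\ref{expdecay} in (ii) is a forward reference (that lemma appears after Lemma~\ref{fredhops} in the paper), but since its proof is independent of Lemma~\ref{fredhops} there is no circularity; second, the paper glosses over the passage between the $W^{2,p}$ and $L^2$ settings that you handle carefully via exponential decay, so your treatment of (ii) is in fact more complete than the paper's one-line justification.
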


\begin{proof} These are standard properties of the Schr\"odinger operators 
$L(\lambda)$.
Part (i) follows for instance from \cite[Section~4]{jls}.
For any $\lambda\in\real$, $L'(\lambda)v=-v$ for all $v\in X$ and $L(\lambda)\in B(X,Y)$ is
self-adjoint. Hence (ii) follows from Proposition~\ref{eigen.prop}(ii). 
\end{proof}

Our next lemma establishes the hypotheses of 
Theorem~\ref{globif} for the truncated operators $G_n$. 

\begin{lem}\label{propofK} 
Suppose that $f$ satisfies $(\mathrm{f}1)$-$(\mathrm{f}5)$, and 
let $g$ be defined by \eqref{rewrite}. Then, for all $n\in\nat$, the operator $G_n$
defined in \eqref{Gn} has the following properties.
\item[$(\mathrm{i})$] $G_n\in C(X,Y)\cap C^1(X\sm\{0\},Y)$.
\item[$(\mathrm{ii})$] $G_n$ is Fr\'echet differentiable at $u=0$ with $G_n'(0)=0$.
\item[$(\mathrm{iii})$] $G_n$ is compact.  
\end{lem}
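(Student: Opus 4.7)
The three properties decouple cleanly: continuity and $C^1$-ness away from $0$ reduce to a chain-rule argument built on Nemytskii calculus; Fr\'echet differentiability at $0$ exploits the decay of $g(x,s)$ as $|s|\to\infty$ given by (f3); compactness reduces via the cutoff $\chi_n$ to Rellich--Kondrachov on the ball $B_n:=\{x\in\rn:|x|\les n\}$.

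For (i), continuity at $v=0$ follows from the crude bound $|G_n(v)(x)|\les M|v(x)|\chi_n(x)$ with $M:=|f|_\infty+|f_\infty|_\infty<\infty$ by (f1) and (f3), giving $\Vert G_n(v)\Vert_Y\les M\Vert v\Vert_Y\to 0$ as $v\to 0$ in $X$. For $C^1$-regularity on $X\sm\{0\}$, since $p\ges 2>1$ the functional $\psi(v):=\Vert v\Vert_X^{-2}$ is of class $C^1$ on $X\sm\{0\}$ (via the standard smoothness of the $L^p$-norm raised to the $p$-th power, composed with $t\mapsto t^{-2/p}$ on $(0,\infty)$), so $v\mapsto\psi(v)v$ is $C^1$ from $X\sm\{0\}$ into $X$. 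Since $p>\tfrac12 N$ gives the embedding $X\hookrightarrow L^\infty(\rn)$, and (f5) provides an $x$-uniform bound on $\partial_2 g(x,\cdot)$ together with equicontinuity of $\{\partial_2 g(x,\cdot)\}_{x\in\rn}$, the Nemytskii operator $u\mapsto g(\cdot,u)$ is Fr\'echet $C^1$ from $L^\infty(\rn)$ to itself with derivative $h\mapsto\partial_2 g(\cdot,u)h$. Composition, followed by pointwise multiplication by $v$ (a bilinear bounded map $L^\infty\x Y\to Y$) and by the fixed $\chi_n$, yields $G_n\in C^1(X\sm\{0\},Y)$.

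For (ii), fix $\eta>0$ and pick $S>0$ such that $|g(x,s)|\les\eta$ for every $x\in\rn$ and $|s|\ges S$; this is legitimate by (f3). For $v\in X\sm\{0\}$, split $B_n$ into $A:=\{x\in B_n:|v(x)|\ges S\Vert v\Vert_X^2\}$ and its complement. On $A$ one has $|G_n(v)|\les\eta|v|$; on $B_n\sm A$ one has $|G_n(v)|\les MS\Vert v\Vert_X^2$. Integration yields
$$
\Vert G_n(v)\Vert_Y \les \eta\Vert v\Vert_Y + MS|B_n|^{1/p}\Vert v\Vert_X^2 \les C\eta\Vert v\Vert_X + MS|B_n|^{1/p}\Vert v\Vert_X^2,
$$
with $C$ the embedding constant of $X\hookrightarrow Y$. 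Dividing by $\Vert v\Vert_X$ gives $\limsup_{v\to 0}\Vert G_n(v)\Vert_Y/\Vert v\Vert_X\les C\eta$, and the arbitrariness of $\eta$ then yields Fr\'echet differentiability at $0$ with $G_n'(0)=0$.

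For (iii), let $\{v_k\}\subset X$ be bounded. Since $G_n(v_k)$ is supported in $\overline{B_n}$ and $W^{2,p}(B_n)$ embeds compactly into both $L^p(B_n)$ and $L^\infty(B_n)$ (the latter because $p>\tfrac12 N$), I extract a subsequence along which $v_k\to v_*$ in $L^\infty(B_n)$ and $\Vert v_k\Vert_X\to\alpha\in[0,\infty)$. If $\alpha=0$, then $\Vert G_n(v_k)\Vert_Y\les M\Vert v_k\Vert_Y\to 0$. If $\alpha>0$, then $v_k/\Vert v_k\Vert_X^2\to v_*/\alpha^2$ in $L^\infty(B_n)$, and a bounded-convergence argument (using $|g|\les M$ and $v_k\to v_*$ in $L^p(B_n)$) yields $G_n(v_k)\to\chi_n g(\cdot,v_*/\alpha^2)v_*$ in $Y$. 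The main technical point I expect to need care is the genuine Fr\'echet (not merely G\^ateaux) $C^1$-regularity of the Nemytskii map $u\mapsto g(\cdot,u)$ on $L^\infty(\rn)$ used in (i); this is exactly where the $x$-uniform equicontinuity in (f5) is essential, converting the pointwise Taylor expansion of $g(x,\cdot)$ into a uniform $L^\infty$ estimate on the remainder.
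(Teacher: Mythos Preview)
Your proof is correct. The paper itself omits the argument, stating only that it is ``very similar to that of \cite[Lemmas~4~and~5]{g2}'' using the embedding $X\subset L^r(\rn)$ for all $2\les r\les\infty$; your splitting in (ii) into the set where $|v|\ges S\Vert v\Vert_X^2$ and its complement, together with your reduction of (iii) to Rellich--Kondrachov on the ball $B_n$, are exactly the standard mechanisms one expects in that reference.
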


\begin{proof} Using the fact that $X\subset L^r(\rn)$ for all $2\les r \les\infty$, the proof
is very similar to that of \cite[Lemmas~4~and~5]{g2} and so we omit it here.
\end{proof}

We will suppose that the hypotheses $(\mathrm{f}1)$ to $(\mathrm{f}6)$ hold for
the rest of this section. 

For $n\in\nat$ and $J:=(\ls,\infty)$, define $F_n:J\x X\to Y$ by 
\begin{equation*}\label{Fn}
F_n(\lambda,v) := L(\lambda)v+G_n(v)
\end{equation*}
and let 
$$
\calZn := \{(\lambda,v)\in J\x X: v\neq0 \ \text{and} \ F_n(\lam,v)=0\},
$$
$$
\calCn := \text{connected component of} \ \calZn\cup\{(\ly,0)\} \  
\text{containing the point} \ (\ly,0).
$$ 
We already know from Lemma~\ref{fredhops} and 
Lemma~\ref{propofK} that we can apply Theorem~\ref{globif} to $F_n$ for any 
$n\in\nat$, with $\lam_0=\ly$. Before doing so, we will first establish some 
preliminary properties of the sets $\calZn$. We start with exponential decay estimates.

\begin{lem}\label{expdecay} Let $(\lam,v) \in J \x X$ satisfy $vL(\lam)v \ges 0$ on $\rn$. 
For any $\ep \in (0,\lam-\ls)$, there exists $C_\ep>0$ such that
$$
|v(x)| \les |v|_\infty \exp[-(\lam-\ls-\ep)^{1/2}(|x|-C_\ep)] \quad \text{for all} \ x \in \rn.
$$ 
\end{lem}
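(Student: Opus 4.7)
The plan is to derive a pointwise differential inequality for $|v|$ and then to compare it to a radial exponential barrier via the weak maximum principle for $-\Delta + \mu^2$ on an exterior domain. Concretely, the hypothesis $vL(\lam)v \ges 0$ rewrites as $v\Delta v \ges (\lam - f_\infty(x))v^2$ on $\rn$, and Kato's inequality $\Delta|v| \ges \sgn(v)\Delta v$ (valid distributionally for $v \in W^{2,p}_{\mathrm{loc}}(\rn)$) yields
$$\Delta |v| \ges (\lam - f_\infty(x))|v| \quad \text{on } \rn.$$

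Next I would exploit (f3): since $\ls = \limsup_{|x|\to\infty} f_\infty(x)$, for any $\ep \in (0, \lam - \ls)$ there exists $R_\ep > 0$ with $f_\infty(x) \les \ls + \ep$ for $|x| \ges R_\ep$. Setting $\mu := (\lam - \ls - \ep)^{1/2} > 0$, the inequality above becomes $(-\Delta + \mu^2)|v| \les 0$ on the exterior region $\Omega_\ep := \{x \in \rn : |x| > R_\ep\}$. As a barrier I would take $\phi(x) := |v|_\infty \exp[-\mu(|x| - R_\ep)]$, a direct computation in polar coordinates giving
$$(-\Delta + \mu^2)\phi = \frac{(N-1)\mu}{|x|}\phi \ges 0 \quad \text{on } \Omega_\ep,$$
so that $\phi$ is a classical supersolution of the same operator.

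The difference $w := |v| - \phi$ then satisfies $(-\Delta + \mu^2)w \les 0$ on $\Omega_\ep$ and is non-positive on $\partial\Omega_\ep = \{|x| = R_\ep\}$ by construction. Moreover $\phi \to 0$ at infinity, and by Proposition~\ref{reg}(i) combined with the Sobolev embedding $W^{2,p}(\rn) \hookrightarrow C_0(\rn)$ (applicable since $p > \tfrac12 N$), so does $|v|$; hence $w \to 0$ at infinity. The weak maximum principle for $-\Delta + \mu^2$ on $\Omega_\ep$ then forces $w \les 0$ on $\Omega_\ep$, which is the required inequality with $C_\ep := R_\ep$. For $|x| \les R_\ep$ the bound holds trivially, since the right-hand side there exceeds $|v|_\infty$.

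I expect the most delicate step to be the rigorous justification of Kato's inequality and of the exterior maximum principle in this non-classical, unbounded-domain setting: both are standard tools, but each relies crucially on the vanishing-at-infinity property of $v$, which is precisely why the Sobolev threshold $p > \tfrac12 N$ (granting the continuous embedding into $C_0(\rn)$) is built into the running hypotheses.
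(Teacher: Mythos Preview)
Your argument is correct and follows the same overall scheme as the paper: pick $C_\ep$ from the $\limsup$ in (f3), use the hypothesis $vL(\lam)v\ges0$ to obtain a differential inequality outside the ball of radius $C_\ep$, and compare with the radial barrier $|v|_\infty\exp[-\sqrt{\lam-\ls-\ep}\,(|x|-C_\ep)]$ via the weak maximum principle on the exterior domain, the decay of $v$ at infinity supplying the boundary condition there.

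The one substantive difference is that the paper does \emph{not} pass through Kato's inequality. Instead it works directly with $v$ (and then with $-v$) by defining the ``bad'' set $\Omega_\ep=\{|x|\ges C_\ep,\ z(x)<0\}$ where $z:=\phi-v$; on this set one automatically has $v>0$, so the hypothesis $vL(\lam)v\ges0$ gives $\Delta v\ges(\lam-f_\infty)v$ pointwise without any absolute-value manipulation, and the maximum principle forces $\Omega_\ep=\emptyset$. This sign-splitting avoids precisely the regularity issue you correctly flag as delicate (namely that $|v|$ is only in $W^{1,p}$, so the comparison must be justified in a weak sense). Your Kato route is perfectly valid, just slightly heavier machinery for the same conclusion.

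One small quibble: your appeal to Proposition~\ref{reg}(i) for the decay of $v$ is misplaced, since that proposition concerns solutions of \eqref{nls}, whereas here $v$ is an arbitrary element of $X$ satisfying only the sign condition. The Sobolev embedding $W^{2,p}(\rn)\hookrightarrow C_0(\rn)$ for $p>\tfrac12 N$, which you also cite, is all that is needed and should stand alone.
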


\begin{proof} 
The proof is similar to that of \cite[Lemma~3.4]{sz} but we give it here for completeness.
By the definition of $\ls$ in (f3), for any $\ep \in (0,\lam-\ls)$, there exists $C_\ep>0$ such that
\begin{equation}\label{limsup}
|x|\ges C_\ep \quad \Longrightarrow \quad f_\infty(x) \les \ls + \ep < \lam.
\end{equation}
Now let $\eta:=\lam-\ls-\ep>0$ and define 
$$
z(x):=|v|_\infty \e^{-\sqrt{\eta}(|x|-C_\ep)} - v(x), \quad
\Omega_\ep:=\{ x\in\rn : |x|\ges C_\ep \,, \ z(x)<0 \}.
$$ 
For $x\in\Omega_\ep$, we have $v(x)\ges 0$ and so by our hypothesis and by \eqref{limsup},
$$
0 \les L(\lam)v(x) = \D v(x) - \lam v(x) + f_\infty(x) v(x)
$$
$$
\Longrightarrow \quad  \D v(x) \ges [\lam - f_\infty(x)]v(x) \ges \eta v(x) 
\quad\text{for all} \ x \in \Omega_\ep.
$$
Hence,
\begin{align*}
\D z(x) 	&= 
|v|_\infty \e^{-\sqrt{\eta}(|x|-C_\ep)}\left( \eta - \sqrt{\eta} \, \frac{N-1}{|x|} \right) - \D v(x) \\
			& \les 
|v|_\infty \e^{-\sqrt{\eta}(|x|-C_\ep)}\left( \eta - \sqrt{\eta} \, \frac{N-1}{|x|} \right) - \eta v(x) \\
			& \les \eta z(x) < 0 \quad\text{for all} \ x \in \Omega_\ep.
\end{align*} 
Furthermore, $z(x)=|v|_\infty-v(x)\ges0$ for $|x|=C_\ep$ and $z(x)\to0$ as $|x|\to\infty$. Assuming that
$\Omega_\ep \neq \emptyset$, it follows by the weak maximum principle  
(see e.g \cite[Theorem~8.1]{gt}) that $z\ges0$ on $\Omega_\ep$, a contradiction. Therefore,
$\Omega_\ep = \emptyset$ and 
$$
v(x) \les |v|_\infty \e^{-\sqrt{\eta}(|x|-C_\ep)} \quad\text{for} \ |x| \ges C_\ep.
$$
Applying a similar argument with $-v$ instead of $v$ shows that
$$ 
-v(x) \les |v|_\infty \e^{-\sqrt{\eta}(|x|-C_\ep)} \quad\text{for} \ |x| \ges C_\ep
$$ 
as well, and so $|v(x)| \les |v|_\infty \e^{-\sqrt{\eta}(|x|-C_\ep)}$ for $|x| \ges C_\ep$. Since the inequality
is obvious for $|x| \les C_\ep$, this completes the proof.
\end{proof}

The following {\it a priori} bounds play a central role in the limit procedure.

\begin{lem}\label{bounds}
There is a constant $A>0$ such that, for any $\mu\in J$, there exists $N_\mu\in\nat$ such that
$$
\forall\,\lambda\ges\mu \ \forall\,n\ges N_\mu \quad 
(\lam,v)\in\calZn \quad\Longrightarrow\quad \Ve v\Ve_X \les A.
$$
\end{lem}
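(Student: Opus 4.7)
The plan is to argue by contradiction. If the conclusion fails for some $\mu \in J$, one can extract sequences $n_k \to \infty$, $\lam_k \ges \mu$ and $v_k$ with $(\lam_k, v_k) \in \calZ_{n_k}$ and $\Ve v_k \Ve_X \to \infty$. Setting $w_k := v_k/\Ve v_k \Ve_X$, the goal is to produce in the limit a nontrivial $H^2$-eigenfunction $w$ of $L_0 = \D + f_0$ with eigenvalue $\lam_* \ges \mu > \ls$. Because $\sigma(L_0) \subset (-\infty, \ls)$ (spectral theory of Schr\"odinger operators, as recalled in the introduction), this is the desired contradiction.

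First I would bound $\lam_k$ from above. Testing $L(\lam_k) v_k + G_{n_k}(v_k) = 0$ against $v_k$ and using (f4) (so $g = f - f_\infty \les 0$), one obtains
$$\intrn\bigl[|\nabla v_k|^2 + (\lam_k - f_\infty) v_k^2\bigr]\diff x \les 0,$$
and the variational characterisation in (f6) immediately forces $\lam_k \les \ly$; up to a subsequence, $\lam_k \to \lam_* \in [\mu, \ly]$. Since $p > \tfrac12 N$ gives $X \hookrightarrow L^\infty(\rn)$, one has $|w_k|_\infty \les C$ and $|v_k/\Ve v_k \Ve_X^2|_\infty \les C/\Ve v_k \Ve_X \to 0$, so by (f2), $g(\cdot, v_k/\Ve v_k \Ve_X^2) \to f_0 - f_\infty$ uniformly on $\rn$. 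The rescaled equation $L(\lam_k) w_k + \chi_{n_k}\, g(\cdot, v_k/\Ve v_k \Ve_X^2)\, w_k = 0$ yields $w_k L(\lam_k) w_k \ges 0$, so Lemma~\ref{expdecay} applies with a decay rate depending only on $\mu - \ls$, giving a uniform exponential decay for $\{w_k\}$.

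Combining this tail control with local Rellich compactness (from $\Ve w_k \Ve_X = 1$), a diagonal argument yields a subsequence $w_k \to w$ strongly in $L^p(\rn)$, with $w_k \wto w$ in $X$. If $w \equiv 0$, then $\Ve w_k \Ve_p \to 0$, and since all coefficients in the rescaled equation are uniformly bounded in $L^\infty$, $\Ve \D w_k \Ve_p \to 0$; the Calder\'on--Zygmund estimate $\Ve \cdot \Ve_X \les C(\Ve \D \cdot \Ve_p + \Ve \cdot \Ve_p)$ then forces $\Ve w_k \Ve_X \to 0$, contradicting the normalisation. So $w \not\equiv 0$, and passing to the limit (dominated convergence, using $\chi_{n_k} \to 1$ a.e.\ and the uniform convergence of $g$) gives $\D w + f_0 w = \lam_* w$; the uniform decay places $w \in H^2(\rn)$, producing the desired eigenfunction of $L_0$.

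The main obstacle is the compactness step: upgrading weak $X$-convergence to strong $L^p$-convergence on \emph{all} of $\rn$ requires controlling the tails, and this is precisely where the uniform exponential decay from Lemma~\ref{expdecay} is critical --- the decay rate degenerates as $\lam_k \to \ls$, which is exactly why the threshold $N_\mu$ is required in order to keep $\lam_k$ uniformly above $\ls$.
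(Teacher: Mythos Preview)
Your approach is sound but takes a genuinely different route from the paper's. The paper argues directly: choose $S>0$ via (f2)--(f3) so that $|s|<S$ forces $f(x,s)<\ls<\mu$, and $N_\mu$ via (f3) so that $f_\infty(x)<\mu$ for $|x|>N_\mu$. With $C$ the Sobolev constant for $X\hookrightarrow L^\infty$, set $A:=C/S$. If $(\lam,v)\in\calZn$ with $\lam\ges\mu$, $n\ges N_\mu$ and $\Ve v\Ve_X>A$, then $|v/\Ve v\Ve_X^2|_\infty<S$; writing out $F_n(\lam,v)=0$ and splitting into $|x|\les n$ (coefficient $\lam-f(x,v/\Ve v\Ve_X^2)>0$) and $|x|>n\ges N_\mu$ (coefficient $\lam-f_\infty(x)>0$) yields $v\D v\ges0$ on all of $\rn$, and a maximum-principle argument as in \cite[Lemma~3.3]{sz} forces $v\equiv0$. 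No sequences, no compactness, no limiting equation. Both proofs exploit the same key observation --- large $\Ve v\Ve_X$ makes $v/\Ve v\Ve_X^2$ uniformly small, so the nonlinearity is close to $f_0$ --- but the paper feeds this straight into the maximum principle, whereas you pass to a limit and invoke the spectral gap for $L_0$.

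There is, however, one genuine shortfall. Your contradiction fixes $\mu$ at the outset, and what it actually delivers is, for each $\mu\in J$, constants $A_\mu$ and $N_\mu$. The lemma asserts (and Lemma~\ref{bounds2} and Lemma~\ref{C0} later use) a \emph{single} constant $A$ valid for every $\mu$. Your method does not obviously give this: negating the full statement allows the offending $\mu$'s to drift down to $\ls$, and then $\lam_k\to\ls$ is possible, the exponential rate in Lemma~\ref{expdecay} degenerates, and your tail control --- hence the strong $L^p$ compactness and the conclusion $w\not\equiv0$ --- breaks down. The paper's explicit $A=C/S$ sidesteps this entirely. (A minor related point: in your closing paragraph, the threshold $N_\mu$ constrains $n$, not $\lam$; its role in the paper's proof is to guarantee $f_\infty(x)<\mu$ on the untruncated region $|x|>n$.)
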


\begin{proof}
For $\mu\in J=(\ls,\infty)$, it follows from (f3) that there exists $N_\mu\in\nat$ such that
\begin{equation}\label{bound1}
|x| > N_\mu \quad\Longrightarrow\quad 0 \les f_\infty(x) < \mu.
\end{equation}
Furthermore, by (f2) and (f3), there is an $S>0$ such that
\begin{equation}\label{bound2}
|s| < S \quad\Longrightarrow\quad 0 \les f(x,s) \les f_0(x) + \de < \ls <\mu \quad\text{for all} \ x \in \rn.
\end{equation}
On the other hand, by the Sobolev embedding, there is a constant $C>0$ such that
$|v|_\infty \les C \Ve v\Ve_X$ for all $v\in X$.
Now define $A:=C/S$ and let $(\lambda,v)\in \calZn$ with $\lambda\ges\mu$ and 
$n\ges N_\mu$. Suppose by contradiction that $\Ve v\Ve_X> A$. It follows that
\begin{equation}\label{maj}
\frac{|v(x)|}{\Ve v\Ve_X^2}\les \frac{|v|_\infty}{\Ve v\Ve_X^2}\les\frac{C}{\Ve v\Ve_X}< S
\quad\text{for all} \ x \in \rn.
\end{equation}
The equation $F_n(\lambda,v)=0$ can be written as
\begin{equation*}\label{MP}
\D v(x)v(x)=
\left\{\lambda-[1-\chi_n(x)]f_\infty(x)
-\chi_n(x)f\left(x,\frac{v(x)}{\Ve v\Ve_X^2}\right)\right\}v(x)^2
\quad\text{for all} \ x \in \rn.
\end{equation*}
For $|x|\les n$, $\chi_n(x)=1$ and we have
$$
\D v(x)v(x)=
\left\{\lambda-f\left(x,\frac{v(x)}{\Ve v\Ve_X^2}\right)\right\}v(x)^2\ges
\left\{\mu-f\left(x,\frac{v(x)}{\Ve v\Ve_X^2}\right)\right\}v(x)^2\ges0
$$
by \eqref{bound2}. For $|x|>n\ges N_\mu$, we have
$$
\D v(x)v(x)=\{\lambda-f_\infty(x)\}v(x)^2\ges\{\mu-f_\infty(x)\}v(x)^2\ges0
$$
by \eqref{bound1}. Hence, $\D v(x)v(x)\ges0$ for all $x\in\rn$. 
Using the maximum principle as in the proof of
\cite[Lemma~3.3]{sz}, it follows that $v(x)\equiv0$, a contradiction.
\end{proof}

The following lemma is a first step towards positivity/negativity of solutions.

\begin{lem}\label{sign} For any fixed $n\in\nat$, there is an open neighbourhood
$U$ of $(\lambda_\infty,0)$ in $\real\x X$ such that $v^2>0$ on $[0,\infty)$ for all
$(\lambda,v)\in\calZn\cap U$.
\end{lem}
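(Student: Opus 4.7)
The plan is to argue by contradiction: suppose no such neighbourhood $U$ exists. Then there are sequences $\lam_k\to\ly$ in $J$ and $v_k\to 0$ in $X$ with $(\lam_k,v_k)\in\calZn$ such that $v_k$ has a zero in $\rn$, say $v_k(x_k)=0$. I will show that, after normalising and passing to a subsequence, $v_k$ must eventually be strictly of one sign on all of $\rn$, yielding a contradiction.

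First I reduce to an approximate eigenvalue problem. Set $w_k:=v_k/\Ve v_k\Ve_X$, so that $\Ve w_k\Ve_X=1$. Dividing the equation $L(\lam_k)v_k+G_n(v_k)=0$ by $\Ve v_k\Ve_X$ gives
\[
L(\lam_k)w_k=-\frac{G_n(v_k)}{\Ve v_k\Ve_X},
\]
and by Lemma~\ref{propofK}(ii) the right-hand side tends to $0$ in $Y$. To upgrade this to compactness for $\{w_k\}$, I use hypothesis (f4): since $g=f-f_\infty\les0$, one has $v_kG_n(v_k)=\chi_n g(x,v_k/\Ve v_k\Ve_X^2)v_k^2\les0$, so $v_kL(\lam_k)v_k\ges0$. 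Lemma~\ref{expdecay} then produces constants $\eta,C>0$, independent of $k$ once $\lam_k$ is confined to a compact subinterval of $J$, such that
\[
|w_k(x)|\les C\e^{-\sqrt{\eta}(|x|-C_\ep)},\qquad x\in\rn.
\]
Combining this uniform tail estimate with the compact embedding $W^{2,p}(B_R)\hookrightarrow L^p(B_R)$ yields, along a subsequence, $w_k\to w_0$ strongly in $Y$; from the equation $\D w_k=\lam_kw_k-f_\infty w_k+o(1)$ in $Y$ and the standard elliptic estimate, the convergence improves to $w_k\to w_0$ strongly in $X$. Thus $\Ve w_0\Ve_X=1$ and $L(\ly)w_0=0$, so by Lemma~\ref{fredhops}(ii) $w_0=\pm\ffy/\Ve\ffy\Ve_X$. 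Passing to a further subsequence, assume $w_0=\ffy/\Ve\ffy\Ve_X>0$ on $\rn$.

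To conclude, I combine uniform convergence with a maximum principle on the exterior of a large ball. By (f3), choose $R\ges n$ large enough that $f_\infty(x)<\ly$ for $|x|\ges R$; for $k$ large, also $\lam_k-f_\infty(x)>0$ on $\{|x|\ges R\}$. Since $p>\tfrac12N$, the embedding $X\hookrightarrow L^\infty(\rn)$ gives $w_k\to w_0$ uniformly on $\rn$. On the compact set $\overline{B_R}$, $w_0$ has a positive minimum, so $v_k>0$ on $\overline{B_R}$ for $k$ large. On $\{|x|\ges R\}$, $\chi_n\equiv0$, hence $v_k$ satisfies $\D v_k=(\lam_k-f_\infty(x))v_k$ with a positive coefficient. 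If $v_k$ took a negative value at some point in this region, then since $v_k\to0$ at infinity (by Lemma~\ref{expdecay} or by Proposition~\ref{reg}(i)), it would attain a negative interior minimum at some $x^{**}$ with $|x^{**}|>R$; but there $0\les\D v_k(x^{**})=(\lam_k-f_\infty(x^{**}))v_k(x^{**})<0$, a contradiction. Therefore $v_k\ges0$ on $\rn$, and the strong maximum principle applied to $\D v_k-\tilde c(x)v_k=0$ with bounded $\tilde c$ forces $v_k>0$ everywhere, contradicting $v_k(x_k)=0$.

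The main obstacle is the strong convergence $w_k\to w_0$ in $X$: weak convergence alone is not enough to identify the limit as $\ffy$, and the embedding $W^{2,p}(\rn)\hookrightarrow L^p(\rn)$ is not compact. This is overcome by the uniform exponential decay of Lemma~\ref{expdecay}, whose hypothesis is granted for free by the sign condition $g\les0$ in (f4); the remainder of the argument is a routine bootstrap and a maximum-principle comparison tailored to the exterior region where the truncation $\chi_n$ vanishes.
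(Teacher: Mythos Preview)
Your proof is correct and follows essentially the same route as the paper's: argue by contradiction, normalise $w_k=v_k/\Ve v_k\Ve_X$, obtain strong convergence in $X$ to a signed eigenfunction $\pm\ffy/\Ve\ffy\Ve_X$, and then use a maximum-principle argument on the exterior of a large ball (where $\chi_n\equiv0$) together with uniform convergence on $\overline{B_R}$ to force $v_k>0$, contradicting $v_k(x_k)=0$. The paper outsources the compactness step to \cite[Lemma~7]{g2}, whereas you make it explicit by invoking Lemma~\ref{expdecay} (legitimised by (f4), which gives $v_kL(\lam_k)v_k\ges0$) to obtain uniform exponential tails for $\{w_k\}$ and then bootstrap; this is a clean and self-contained variant of the same idea. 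One minor point: your interior-minimum argument on $\{|x|>R\}$ tacitly uses a pointwise Hessian inequality, which is not guaranteed for $W^{2,p}$ functions; replace it by the weak maximum principle \cite[Theorem~8.1]{gt}, exactly as the paper does.
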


\begin{proof}
The first part of the proof is similar to that of \cite[Lemma~7]{g2}. Assuming by
contradiction that there is a sequence $\{(\lam_k,v_k)\}\subset\calZn$ such that 
$\lam_k\to\ly$, $\Ve v_k\Ve_X\to0$ and $v_k^2\not>0$ on $\rn$, and defining 
$u_k:=v_k/\Ve v_k\Ve_X$, it follows that there exists $\overline{u}\in X$, with either 
$\overline{u}>0$ or $\overline{u}<0$, such that $u_k\to\overline{u}$ in $X$.
We suppose that $\overline{u}>0$, the other case being similar.
The analogue of equation (17) of \cite{g2} in the present context is
\begin{equation}\label{sign.eq}
\D u_k(x) = \{\lam_k - f_\infty(x) - \chi_n(x)g(x,v_k(x)/\Vert v_k\Vert_X^2)\} u_k(x).
\end{equation}
Since $\lam_k\to\ly$, there exists $k_0$ such that $\lam_k\ges (\ls+\ly)/2>\ls$ 
for all $k\ges k_0$. Furthermore, there exists $R>n$ such that 
$\lam_k-f_\infty(x)-\chi_n(x) g(x,v_k(x)/\Ve v_k\Ve_X^2)>0$ for $|x|>R$ and 
$k\ges k_0$. Also, since $u_k\to\overline{u}$ uniformly on $\rn$ by the Sobolev embedding,
there exist $\delta>0$ and $k_1\ges k_0$ such that $u_k(x)\ges\delta$ for $|x|\les R$ and
$k\ges k_1$. Since $u_k(x)\to0$ as $|x|\to\infty$,
the weak maximum principle now implies that $u_k\ges0$ on $\Omega:=\{x\in\rn: |x|>R\}$, 
for all $k\ges k_1$. Rewriting \eqref{sign.eq} as
$$
-\D u_k + c_+ u_k = c_- u_k
$$
with $c(x):=\lam_k - f_\infty(x) - \chi_n(x)g(x,v_k(x)/\Vert v_k\Vert_X^2)$ and 
$c_\pm:=\max\{\pm c,0\}$, it follows by the strong maximum principle 
\cite[Theorem~8.19]{gt} that $u_k>0$ on $\rn$,  for all $k\ges k_1$. But this
contradicts $v_k^2\not>0$ on $\rn$, and concludes the proof.
\end{proof}

The following lemma describes how elements of $\calZn$ can approach the line of trivial solutions. It can be proved in the same way as Lemma~8 of \cite{g2}.

\begin{lem}\label{approach} Fix $n\in\nat$ and let 
$\{(\lam_k,v_k)\}\subset\calZn$. Suppose that $v_k^2>0$ on $\rn$, 
$\lam_k\to\lambda\in J$ and $\Ve v_k\Ve_X\to0$. Then $\lam=\ly$.
\end{lem}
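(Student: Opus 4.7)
The plan is to renormalize $u_k := v_k/\Ve v_k\Ve_X$, extract a subsequence converging to a nonnegative eigenfunction of $L_\infty$, and use the fact that $\ly$ is the only eigenvalue of $L_\infty$ admitting such an eigenfunction. Since $v_k^2>0$ on $\rn$ and $v_k$ is continuous by Sobolev embedding, each $v_k$ has constant sign; without loss of generality $u_k>0$ on $\rn$, with $\Ve u_k\Ve_X=1$. Dividing $F_n(\lam_k,v_k)=0$ through by $\Ve v_k\Ve_X$ gives
$$
L(\lam_k)u_k + \chi_n(x)g(x,v_k/\Ve v_k\Ve_X^2)u_k = 0.
$$
By (f4) we have $g\les 0$, so $v_k L(\lam_k)v_k\ges 0$ on $\rn$. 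Since $\lam_k\to\lam>\ls$, Lemma~\ref{expdecay} applied with $\eps:=(\lam-\ls)/4$ provides positive constants $c, C, C_0$, independent of $k$, such that
$$
|u_k(x)|\les C\e^{-c(|x|-C_0)} \quad\text{for all } x\in\rn,
$$
the Sobolev embedding $X\hookrightarrow L^\infty(\rn)$ bounding $|u_k|_\infty$ uniformly.

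The main obstacle is to pass to the limit in the nonlinear term, which is not Fr\'echet differentiable at $v=0$; the truncation by $\chi_n$ is what makes this tractable. After extracting a further subsequence, $u_k\wto\overline u$ in $X$, and Rellich--Kondrachov yields strong $L^q_{\rm loc}$ convergence for every $q\in[1,\infty]$, hence pointwise a.e. convergence. The uniform exponential decay combined with dominated convergence upgrades this to strong convergence in $L^q(\rn)$ for every $q\in[2,\infty]$. I claim $\chi_n g(x,v_k/\Ve v_k\Ve_X^2)u_k\to 0$ in $Y$: pointwise a.e., where $\overline u(x)>0$ one has $v_k(x)/\Ve v_k\Ve_X^2=u_k(x)/\Ve v_k\Ve_X\to+\infty$, so $g(x,v_k/\Ve v_k\Ve_X^2)\to 0$ by (f3); where $\overline u(x)=0$, $u_k(x)\to 0$ and $g$ remains bounded by (f1). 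The integrand is dominated by $2|f|_\infty\chi_n\sup_k|u_k|_\infty$, which lies in $Y$ because $\chi_n$ has compact support, so dominated convergence yields the claim. Combined with $L(\lam)u_k\wto L(\lam)\overline u$ in $Y$ and $(L(\lam_k)-L(\lam))u_k=(\lam-\lam_k)u_k\to 0$ in $Y$, the equation passes to the limit as $L(\lam)\overline u=0$.

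It remains to show $\overline u\not\equiv 0$ and identify $\lam$. If $\overline u=0$, then $u_k\to 0$ in $L^p(\rn)$ and, from $\D u_k=(\lam_k-f_\infty-\chi_n g)u_k$ with uniformly bounded coefficient, also $\D u_k\to 0$ in $L^p$; the Calder\'on--Zygmund estimate $\Ve u\Ve_{W^{2,p}}\les C(|u|_p+|\D u|_p)$ then forces $\Ve u_k\Ve_X\to 0$, contradicting $\Ve u_k\Ve_X=1$. Hence $\overline u\in X$ is a nonzero nonnegative solution of $\D\overline u+f_\infty\overline u=\lam\overline u$, with exponential decay so $\overline u\in H^1(\rn)$, and $\overline u>0$ on $\rn$ by the strong maximum principle. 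Pairing with the positive principal eigenfunction $\ffy$ from Proposition~\ref{eigen.prop}(ii) and using self-adjointness of $L_\infty$ on $L^2(\rn)$ gives
$$
(\lam-\ly)\intrn\overline u\,\ffy\diff x = 0,
$$
and the integral is strictly positive, forcing $\lam=\ly$.
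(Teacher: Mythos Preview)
Your proof is correct and follows essentially the same approach the paper has in mind (normalize $u_k=v_k/\Ve v_k\Ve_X$, obtain uniform exponential decay via Lemma~\ref{expdecay} and $g\les 0$, use the compact support of $\chi_n$ to kill the nonlinear term by dominated convergence, pass to the limit to a nonnegative eigenfunction of $L_\infty$, and identify $\lam=\ly$ by pairing with $\ffy$). The paper itself gives no proof here, simply citing \cite[Lemma~8]{g2}; your argument is the natural adaptation of that method to the present $W^{2,p}(\rn)$ setting, with the Calder\'on--Zygmund estimate providing a clean way to exclude $\overline u=0$.
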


We now describe the main properties of the component $\calCn$.

\begin{prop}\label{thm1}
\item[$(\mathrm{i})$] For any $n\in\nat$, if $(\lam,v)\in\calCn\sm\{(\ly,0)\}$ then 
$\lam\les\ly$ and $v^2>0$ on $\rn$.
\item[$(\mathrm{ii})$] There is a constant $A>0$ such that, for any $\mu\in J$, there exists $N_\mu\in\nat$ such that
$$
\forall\,n\ges N_\mu \quad \inf\proj\calCn<\mu
\quad\text{and}\quad \Ve v\Ve_X \les A \quad\text{if}\quad (\lam,v)\in\calCn 
\ \text{with} \ \lam\ges\mu.
$$
\end{prop}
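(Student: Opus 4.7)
The plan is to apply Theorem~\ref{globif} to each truncated problem $F_n = L(\lam) + G_n$ on $J \times X$ with $\lam_0 = \ly$, which is legitimate by Lemmas~\ref{fredhops} and~\ref{propofK}. Part~(i) then propagates the local sign information from Lemma~\ref{sign} along the connected component by a standard open-closed argument, while part~(ii) rules out the alternatives of Theorem~\ref{globif} using the \emph{a priori} bounds of Lemma~\ref{bounds} and the trivial-solution exclusion of Lemma~\ref{approach}.

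For part~(i), I would set $A := \{(\lam,v) \in \calCn : v \ges 0 \text{ on } \rn \text{ or } v \les 0 \text{ on } \rn\}$, observe that $(\ly,0) \in A$, and prove $A = \calCn$ by showing $A$ is both closed and open in the connected set $\calCn$. Closedness is immediate from the Sobolev embedding $X \hookrightarrow C_b(\rn)$, guaranteed by $p > \tfrac{1}{2}N$. For openness at some $(\lam_0,v_0) \in A$, the case $v_0 \equiv 0$ reduces to $(\lam_0,v_0)=(\ly,0)$ and is handled by Lemma~\ref{sign}; otherwise, the strong maximum principle applied to $\D v_0 + (f_\infty + \chi_n g(\cdot,v_0/\Ve v_0\Ve_X^2) - \lam_0)v_0 = 0$ yields $v_0 > 0$ strictly (say), and one argues by contradiction using a sequence $(\lam_k,v_k) \to (\lam_0,v_0)$ in $\calCn$ of sign-changing solutions: at a global minimum $y_k \in \rn$ of $v_k$ with $v_k(y_k) < 0$ (attained since $v_k \in C_0(\rn)$), evaluating the PDE and using $g \les 0$ from (f4) gives $f_\infty(y_k) \ges \lam_k$, which via (f3) confines the $y_k$ to a bounded set independent of $k$; a subsequential limit $y_*$ then satisfies $v_0(y_*) \les 0$, contradicting strict positivity. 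Once $A = \calCn$ is established, SMP promotes $v \ges 0,\ v \not\equiv 0$ to $v > 0$, so $v^2 > 0$ on $\rn$. The bound $\lam \les \ly$ then follows by multiplying the equation for $v$ by the positive principal eigenfunction $\ffy$ of Proposition~\ref{eigen.prop}(ii), integrating by parts (justified by Lemma~\ref{expdecay} applied to both $v$ and $\ffy$, since $vL(\lam)v = -\chi_n g v^2 \ges 0$), and reading off the signs in the resulting identity $(\ly - \lam)\intrn v\ffy \dif x = -\intrn \chi_n g(x,v/\Ve v\Ve_X^2)\, v\ffy \dif x$.

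For part~(ii), the uniform bound $\Ve v\Ve_X \les A$ when $\lam \ges \mu$ is nothing but Lemma~\ref{bounds} applied inside $\calCn \subset \calZn \cup \{(\ly,0)\}$ (the trivial point poses no issue, up to enlarging $A$). The inequality $\inf\proj\calCn < \mu$ I would prove by contradiction: assuming $\proj\calCn \subset [\mu,\infty)$, part~(i) confines $\calCn$ to the strip $[\mu,\ly] \times X$, and the previous bound then makes $\calCn$ bounded in $\real \times X$, ruling out alternative~(a) of Theorem~\ref{globif}; the strict inequalities $\mu > \ls = \inf J$ and $\ly < \infty = \sup J$ rule out~(c); finally, alternative~(b) would supply a sequence $(\lam_k,v_k) \in \calCn$ with $\Ve v_k\Ve_X \to 0$ and $\lam_k \to \lam_* \in J \setminus \{\ly\}$, which combined with $v_k^2 > 0$ from part~(i) contradicts Lemma~\ref{approach}.

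The step I expect to be the main obstacle is the openness of $A$ in $\calCn$: since any strictly positive $v_0$ on $\rn$ decays to zero at infinity, uniform closeness of $v_k$ to $v_0$ does not by itself prevent $v_k$ from becoming slightly negative far out, unlike in the bounded-domain setting. The PDE together with the hypothesis $\limsup_{|x|\to\infty} f_\infty(x) = \ls < \lam_0$ is precisely what localizes potential negative minima to a bounded region and rescues the argument.
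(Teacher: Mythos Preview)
Your proposal is correct and follows essentially the same route as the paper: an open--closed argument on $\calCn$ for the sign property (the paper uses the set $\mathcal{Q}=\{v^2>0\}\cup\{(\ly,0)\}$ with strict inequality rather than your $A$ with weak inequality, but this is cosmetic), the pairing with $\ffy$ for $\lam\les\ly$, and the elimination of the three alternatives of Theorem~\ref{globif} via Lemmas~\ref{bounds} and~\ref{approach} for part~(ii). One minor technical point: your ``evaluating the PDE at the global minimum $y_k$'' presupposes $C^2$ regularity, which is not directly available since $f_\infty$ and $g$ are merely continuous; the paper sidesteps this by invoking the weak maximum principle on the exterior domain $\{|x|>R\}$ exactly as in the proof of Lemma~\ref{sign}, which yields the same localisation of sign changes to a compact set and hence the same contradiction.
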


\begin{proof} The first part of the proof follows that of Proposition~9 in \cite{g2}, using 
Lemma~\ref{sign}, Lemma~\ref{approach} and similar maximum principle arguments as above to show that the set 
$$
\mathcal{Q}=\{(\lam,v)\in\calCn: v^2>0 \ \text{on} \ \rn\}\cup\{(\ly,0)\}
$$ 
is both open and closed in $\calCn$, with respect to the topology inherited from $\real \x X$. 
Since $\calCn$ is connected, this implies $\calCn=\mathcal{Q}$. 
Furthermore, using Proposition~\ref{eigen.prop}, it follows by a similar proof to that
of \cite[Proposition~14(iv)]{g2} that solutions of $F_n(\lam,v)=0$ with either $v>0$ or $v<0$ 
satisfy $\lam\les\ly$. This proves (i).

The proof of (ii) is identical to that of Proposition~9(ii) in \cite{g2}, using 
Lemma~\ref{bounds} and Lemma~\ref{approach} above.
\end{proof}

We are now in a position to prove global bifurcation for the inverted problem \eqref{opequ}.
We define $F:J\x X\to Y$ by 
\begin{equation*}\label{F}
F(\lambda,v):=L(\lambda)v+G(v), \quad v \in X,
\end{equation*}
and we let
$$
\calZ := \{(\lambda,v)\in J\x X: v^2>0 \ \text{and} \ F(\lam,v)=0\},
$$
$$
\calC := \text{connected component of} \ \calZ\cup\{(\ly,0)\} \  
\text{containing the point} \ (\ly,0).
$$ 
The following estimate is proved in the same way as Lemma~\ref{bounds}, 
replacing $\chi_n$ by 1.

\begin{lem}\label{bounds2}
Let $A>0$ be the constant given by Lemma~\ref{bounds}. For all $(\lam,v)\in J \x X$ such that $F(\lam,v)=0$ we have $\Ve v\Ve_X\les A$.
\end{lem}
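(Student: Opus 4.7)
The plan is to follow the proof of Lemma~\ref{bounds} verbatim, with the cut-off $\chi_n$ replaced by $1$ throughout. Since without a cut-off there is no splitting between $|x|\les n$ and $|x|>n$, the threshold $N_\mu$ disappears and a single pointwise inequality suffices for the maximum principle step.

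First, I would take $A=C/S$ exactly as in Lemma~\ref{bounds}, where $C$ is the norm of the embedding $X\hookrightarrow L^\infty(\rn)$ and $S$ is the number from (f2)-(f3) appearing in \eqref{bound2}. Assuming by contradiction that $(\lam,v)\in J\x X$ solves $F(\lam,v)=0$ but $\Ve v\Ve_X>A$, the chain of inequalities \eqref{maj} still applies and yields $|v(x)|/\Ve v\Ve_X^2<S$ on $\rn$. Then \eqref{bound2} gives $f(x,v(x)/\Ve v\Ve_X^2)\les f_0(x)+\de<\ls<\lam$ pointwise on $\rn$, and rewriting $F(\lam,v)=0$ by means of $g=f-f_\infty$ produces
$$
\D v(x)\,v(x)=\left\{\lam-f\!\left(x,\frac{v(x)}{\Ve v\Ve_X^2}\right)\right\}v(x)^2\ges0 \quad\text{for all } x\in\rn.
$$
This single inequality does the work of the two cases $|x|\les n$ and $|x|>n$ in the proof of Lemma~\ref{bounds}.

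Finally, I would invoke the maximum principle argument from \cite[Lemma~3.3]{sz}, exactly as in the closing line of the proof of Lemma~\ref{bounds}, to conclude $v\equiv 0$, contradicting $\Ve v\Ve_X>A>0$. No step presents any real obstacle here: the truncation $\chi_n$ was introduced only to secure compactness and differentiability of $G_n$, and its removal in fact eliminates the need for the auxiliary threshold $N_\mu$ without otherwise changing the argument, which is precisely why the same constant $A$ from Lemma~\ref{bounds} can be reused.
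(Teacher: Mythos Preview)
Your proposal is correct and matches the paper's own approach exactly: the paper simply states that the estimate is proved ``in the same way as Lemma~\ref{bounds}, replacing $\chi_n$ by 1'', which is precisely the argument you have written out. Your observation that the case split and the threshold $N_\mu$ become unnecessary once $\chi_n\equiv1$ is accurate and explains why the same constant $A=C/S$ carries over.
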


\begin{thm}\label{thm2} Suppose $U\subset\real\x X$ is open, bounded, such that 
$(\ly,0)\in U$ and
$$
\mu:=\inf\{\lam:(\lam,v)\in U\}>\ls.
$$
\item[$(\mathrm{i})$] $\calZ\cap\partial U\neq\emptyset$.
\item[$(\mathrm{ii})$] $\calC$ is bounded in $\real\x X$ with
$\inf\proj\calC=\ls$ and $\sup\proj\calC=\ly$.
\item[$(\mathrm{iii})$] Let $\{(\lam_k,v_k)\}\subset \calC$. 
If $\lam_k\to\lam\in J$ and $\Ve v_k\Ve_X\to0$, then
$\lam=\ly$. Conversely, if $\lam_k\to\ly$, then $\Ve v_k\Ve_X\to0$.
\end{thm}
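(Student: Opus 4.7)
I would obtain $\calC$ as a suitable limit of the components $\calCn$ from Proposition~\ref{thm1}, using the uniform exponential decay of Lemma~\ref{expdecay} together with elliptic regularity to supply the compactness that bounded sets in $\real\x X$ do not provide on their own. The central technical tool is a \emph{compactness principle}: if $(\lam_k,v_k)$ is a sequence with $\lam_k\in[\mu,\ly]$ for some $\mu>\ls$, $\Ve v_k\Ve_X\les A$, $v_k\ges 0$, and $F_{n_k}(\lam_k,v_k)=0$ for some $n_k\in\nat\cup\{\infty\}$ (writing $F_\infty:=F$), then a subsequence converges strongly in $\real\x X$ to a solution of the limiting equation. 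Indeed, (f4) gives $v_k L(\lam_k)v_k=-\chi_{n_k}g(\cdot,v_k/\Ve v_k\Ve_X^2)v_k^2\ges 0$, so Lemma~\ref{expdecay} yields uniform exponential decay $|v_k(x)|\les M\e^{-\alpha|x|}$; weak convergence in $W^{2,p}$, Rellich on balls, and tightness from the decay give $v_k\to v$ in $L^p(\rn)$, and elliptic regularity applied to the equation upgrades this to convergence in $X$.

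\textbf{Part (i).} Pick $\mu'\in(\ls,\mu)$ where $\mu:=\inf\proj U>\ls$. By Proposition~\ref{thm1}(ii), for $n\ges N_{\mu'}$ the set $\calCn$ contains both $(\ly,0)\in U$ and points with $\lam<\mu'$, so connectedness forces $\calCn\cap\partial U\neq\emptyset$. Pick $(\lam_n,v_n)\in\calCn\cap\partial U$, bounded in both $\lam$ and $\Ve\cdot\Ve_X$, and apply the compactness principle to extract a limit $(\lam_*,v_*)\in\partial U$. If $\Ve v_*\Ve_X=0$, normalise $u_n:=v_n/\Ve v_n\Ve_X$: the factor $g(\cdot,v_n/\Ve v_n\Ve_X^2)u_n$ tends to $0$ in $L^p$ by Vitali (using (f3) and the boundedness of $g$), so in the limit $L(\lam_*)u_*=0$ with $\Ve u_*\Ve_X=1$, $u_*\ges 0$; Proposition~\ref{eigen.prop}(ii) then forces $\lam_*=\ly$, contradicting $(\ly,0)\in U^\circ$. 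Hence $\Ve v_*\Ve_X>0$, $F(\lam_*,v_*)=0$, and the strong maximum principle yields $v_*>0$, so $(\lam_*,v_*)\in\calZ\cap\partial U$.

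\textbf{Part (ii).} Boundedness of $\calC$ follows from Lemma~\ref{bounds2} (for $\Ve v\Ve_X$), the upper bound $\lam\les\ly$ obtained by pairing $F(\lam,v)=0$ against $\ffy$ and using $g\les 0$ together with self-adjointness of $L_\infty$, and the defining condition $\lam>\ls$; with $(\ly,0)\in\calC$ this gives $\sup\proj\calC=\ly$. For $\inf\proj\calC=\ls$, suppose for contradiction that $\mu_0:=\inf\proj\calC>\ls$ and fix $\mu\in(\ls,\mu_0)$. The bounded open set $U=\{(\lam,v):\mu<\lam<\ly+1,\,\Ve v\Ve_X<A+1\}$ contains $\calC$ and satisfies the hypotheses of (i), so $\calZ\cap\partial U$ contains a point $(\lam_*,v_*)$ arising as a limit of $(\lam_n,v_n)\in\calCn\cap\partial U$. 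I would then invoke a Whyburn-type theorem for limits of connected sets, applied inside the (relatively) compact set supplied by the compactness principle: the limiting connected component containing $(\ly,0)$ lies in $\calZ\cup\{(\ly,0)\}$, and hence in $\calC$. This places $(\lam_*,v_*)$ in $\calC\cap\partial U$, contradicting $\calC\subset U$.

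\textbf{Part (iii) and main obstacle.} For the first implication, if $\Ve v_k\Ve_X\to 0$ then $u_k:=v_k/\Ve v_k\Ve_X$ satisfies $L(\lam_k)u_k=-g(\cdot,v_k/\Ve v_k\Ve_X^2)u_k$ with $\Ve u_k\Ve_X=1$; the compactness principle and Vitali produce $u_k\to u_*$ in $X$ with $L(\lam)u_*=0$, $u_*\ges 0$, $\Ve u_*\Ve_X=1$, whence $\lam=\ly$ by Proposition~\ref{eigen.prop}(ii). For the converse, if $\lam_k\to\ly$ but $\Ve v_k\Ve_X\not\to 0$ along some subsequence, the compactness principle yields a limit $v_*>0$ with $\Ve v_*\Ve_X>0$ and $F(\ly,v_*)=0$; pairing with $\ffy$ forces $g(\cdot,v_*/\Ve v_*\Ve_X^2)\equiv 0$ a.e., which combined with $v_*(x)\to 0$ at infinity, (f2) and (f3) contradicts $f_\infty(x_j)-f_0(x_j)\ges\de/2$ along a sequence $x_j\to\infty$ with $f_\infty(x_j)\to\ls$. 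The main obstacle is the passage to the limit in $\calCn$ (especially for part (ii)): since $\real\x X$ is not locally compact in the strong topology, Whyburn's theorem cannot be invoked off the shelf, and it is the compactness principle---together with a careful treatment of the singular term $v/\Ve v\Ve_X^2$ as $\Ve v\Ve_X$ degenerates---that makes the argument go through.
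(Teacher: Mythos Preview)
Your approach is essentially the same as the paper's, which defers the bulk of the argument to Theorem~11 of \cite{g2}: the compactness principle built from Lemma~\ref{expdecay} plus elliptic regularity, the Whyburn-type limit of the connected sets $\calCn$, the normalisation argument for sequences with $\Ve v_k\Ve_X\to 0$, and the pairing with $\ffy$ are exactly the ingredients used there. The one point the paper spells out in detail---that positive/negative solutions of $F(\lam,v)=0$ satisfy the \emph{strict} inequality $\lam<\ly$, because $\liminf_{|x|\to\infty}h(x,v(x))\les -\de<0$ forces $h<0$ on a set of positive measure---is precisely your contradiction argument for the converse in (iii), phrased as ``$g\equiv 0$ is incompatible with $f_\infty(x_j)-f_0(x_j)\ges\de/2$ along a sequence $|x_j|\to\infty$''; the paper simply packages this as an \emph{a priori} bound $\lam<\ly$ rather than arguing by contradiction at the endpoint.
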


\begin{proof}
Using the preceding results, the proof essentially follows that of 
Theorem~11 in \cite{g2} and we shall only indicate a few details here. 
First note that, in \cite{g2}, we were dealing with positive solutions only.
However, the limit procedure can be carried out in the same way here, yielding solutions with
either $v\ges0$ or $v\les0$ on $\rn$. 
It then follows by the strong maximum principle (as used in the proof of Lemma~\ref{sign})
that such solutions actually satisfy either $v>0$ or $v<0$ on $\rn$. 

Also, it is required in the proof that solutions of 
$F(\lam,v)=0$ with either $v>0$ or $v<0$ satisfy $\lam<\ly$. 
Using the eigenfunction $\ffy>0$ given by Proposition~\ref{eigen.prop}, it 
follows as in the first part of the proof of \cite[Proposition~14(iv)]{g2} that
$$
\ly\intrn \ffy v \diff x = \lam\intrn v \, \ffy \diff x - \intrn h(x,v) v \, \ffy \diff x, 
$$
where $h(x,v(x)):=f(x,v(x)/\Ve v\Ve_X^2)-f_\infty(x)$. Assuming that $v>0$ on $\rn$ and 
remarking that, by (f4), $h\les0$, we see that $\ly\ges\lam$. Then, by (f2) and (f3),
$$
\liminf_{|x|\to\infty} h(x,v(x)) = \liminf_{|x|\to\infty} f_0(x) - \ls \les -\de < 0.
$$
Since $h(x,v(x))$ is continuous, it follows that $h(x,v(x))<0$ on a set of positive 
measure. Hence, $\intrn h(x,v) v\,\ffy \diff x<0$ and we have $\ly>\lam$. 
The case $v<0$ is similar.
\end{proof}


\section{Proof of Theorem~\ref{thm}}

We shall make the hypotheses (f1)-(f6) throughout this section and use the same notations 
as above. 

We prove Theorem~\ref{thm} using the solution set obtained in 
Theorem~\ref{thm2} and the inversion $v\to v/\Ve v\Ve_X^2$. To preserve
connectedness under inversion, we first need to get rid of the right end-point of 
$\calC$. This is done by the following lemma, which can be proved 
similarly to Corollary~5.3 of \cite{sz}.

\begin{lem}\label{C0}
There exists a connected subset $\calCo$ of $\calC\sm\{(\ly,0)\}$ such that 
$\inf\proj\calCo=\ls$ and $(\ly,0)\in\overline{\calCo}$. In particular, 
$\sup\proj\calCo=\ly$ and $0<\Ve v\Ve_X\les A$ for all $(\lam,v)\in\calCo$.
\end{lem}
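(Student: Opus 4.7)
The plan is to follow the strategy of \cite[Corollary~5.3]{sz}, exploiting the fact that $\calC$ is obtained as a Whyburn-type limit of the connected sets $\calCn$, together with local bifurcation theory applied to each approximate problem $F_n$.

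The first step is to show that $\calCn\sm\{(\ly,0)\}$ is connected for every $n\in\nat$. Because $F_n$ is $C^1$ near $(\ly,0)$ with $F_n'(\ly,0)=L(\ly)$ Fredholm of index zero, having a one-dimensional kernel spanned by $\ffy$ and satisfying the transversality condition (Lemmas~\ref{fredhops} and \ref{propofK}), the Crandall--Rabinowitz theorem describes the nontrivial solutions of $F_n(\lam,v)=0$ in a neighborhood of $(\ly,0)$ as a $C^1$-curve through that point. By Proposition~\ref{thm1}(i), only the portion of that curve with $v^2>0$ belongs to $\calCn$, giving a local half-arc emanating from $(\ly,0)$. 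If $\calCn\sm\{(\ly,0)\}$ were disconnected, say $U\sqcup V$ with both open in the subspace, then the part not containing this local half-arc would not have $(\ly,0)$ in its closure in $\calCn$, hence would be clopen in $\calCn$, contradicting the connectedness of $\calCn$.

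Next I would apply the same Whyburn-type limit procedure used in the proof of Theorem~\ref{thm2} (following \cite[Theorem~11]{g2}), but starting from the connected sets $\calCn\sm\{(\ly,0)\}$. The uniform a priori bound $\Ve v\Ve_X\les A$ from Proposition~\ref{thm1}(ii), together with the estimate $\inf\proj\calCn<\mu$ valid for every $\mu>\ls$ and $n\ges N_\mu$, provides the compactness and reach-down to $\ls$ needed for the limit to be nonempty and nontrivial. The outcome is a closed connected set $\calCo^*\subseteq\calC$ with $(\ly,0)\in\calCo^*$, $\inf\proj\calCo^*=\ls$, and $\Ve v\Ve_X\les A$ throughout $\calCo^*$.

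The main obstacle is then to set $\calCo := \calCo^*\sm\{(\ly,0)\}$ and verify that it is still connected. If instead $\calCo=U\sqcup V$ were a separation by open sets of $\real\x X$, then by connectedness of $\calCo^*$ the point $(\ly,0)$ would lie in $\bar U\cap\bar V$. However, the Crandall--Rabinowitz description from the first step shows that for each sufficiently large $n$ the half-arc $\calCn\sm\{(\ly,0)\}$ approaches $(\ly,0)$ tangentially to the one-dimensional direction $\ffy$; any point of $\calCn$ sufficiently near $(\ly,0)$ therefore lies in an arbitrarily narrow cone around that direction. Passing to the limit along the Whyburn procedure, the same is true of points of $\calCo^*$ near $(\ly,0)$, which prevents $\calCo^*\sm\{(\ly,0)\}$ from splitting into two pieces each accumulating at $(\ly,0)$, and so $\calCo$ is connected. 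The remaining properties are immediate: $\sup\proj\calCo=\ly$ follows from $(\ly,0)\in\overline{\calCo}$ and $\calCo\subseteq\calC$, while $0<\Ve v\Ve_X\les A$ is a consequence of Lemma~\ref{bounds2} together with the fact (established in Theorem~\ref{thm2}) that the only point of $\calC$ with $v=0$ is $(\ly,0)$ itself.
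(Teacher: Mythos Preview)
Your argument has a genuine gap at the very first step: you invoke the Crandall--Rabinowitz theorem for $F_n$ near $(\ly,0)$, but that theorem requires $F_n$ to be of class $C^1$ in a full neighbourhood of that point. Lemma~\ref{propofK} only asserts that $G_n\in C^1(X\sm\{0\},Y)$ and that $G_n$ is Fr\'echet differentiable at $0$ with $G_n'(0)=0$; it does \emph{not} claim that $G_n'$ is continuous at $0$. Indeed, continuity of $G_n'$ at $0$ would force $\partial_2 f(x,s)\,s\to 0$ as $|s|\to\infty$, and no such hypothesis is made (recall also the remark just before \eqref{Gn} that $G$ itself is not differentiable at $v=0$). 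Hence the local $C^1$-curve description of $\calZn$ near $(\ly,0)$ is unavailable, and with it both your connectedness argument for $\calCn\sm\{(\ly,0)\}$ and the tangent-cone reasoning in your third step.

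There is a second problem even if one grants the local curve description. Proposition~\ref{thm1}(i) says that every $(\lam,v)\in\calCn\sm\{(\ly,0)\}$ satisfies $v^2>0$ on $\rn$, i.e.\ either $v>0$ or $v<0$; this does \emph{not} single out a half. Along a Crandall--Rabinowitz curve one has $v\approx s\ffy$ with $\ffy>0$, so for $s>0$ the solution is positive and for $s<0$ it is negative, and \emph{both} halves lie in $\calCn$. Removing the point $(\ly,0)$ therefore produces two local pieces, and your clopen argument for the connectedness of $\calCn\sm\{(\ly,0)\}$ collapses. The argument of \cite[Corollary~5.3]{sz} to which the paper refers is purely topological: it uses only the properties of $\calC$ already established in Theorem~\ref{thm2} (boundedness, $\inf\proj\calC=\ls$, and that $(\ly,0)$ is the unique point of $\overline{\calC}$ with $v=0$) together with a separation/limit argument, and requires no smoothness of $F$ or $F_n$ at $v=0$.
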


\begin{rem}
Since $\calZ \subset \real \x C(\rn)$, it follows that the set $\calCo$ obtained in 
Lemma~\ref{C0} lies in one of the sets $\calZ^\pm:= \{(\lambda,v)\in \calZ: \pm v > 0\}$.
\end{rem}

Using the same anti-symmetrization procedure 
as in the beginning of the proof of Theorem~2.3 in \cite{sz},
we obtain two connected subsets $\calCo^\pm$ of $\calC\sm\{(\ly,0)\}$, consisting of
positive/negative solutions and satisfying the properties of $\calCo$ given by 
Lemma~\ref{C0}. Then, from the preceding results, we see that setting
\begin{equation}\label{solset}
\calS^\pm:=\left\{\left(\lam,\frac{v}{\Ve v\Ve_X^2}\right):(\lam,v)\in\calCo^\pm\right\}
\end{equation}
defines connected sets of positive/negative solutions of \eqref{nls}. 
It now follows from Proposition~\ref{reg} that 
$\calS^\pm \subset \real \x H^1(\rn)$ and that property (iv) of Theorem~\ref{thm} holds. 
Furthermore, properties (i) and (ii) follow from Theorem~\ref{thm2}(ii). 
It only remains to prove (iii). 
We already know from Theorem~\ref{thm2}(iii) that 
$\Ve u_n\Ve_X \to \infty \Leftrightarrow \lam=\ly$ 
and, since $- \D +1: X \to Y$ is an isomorphism, it follows from
\begin{equation}\label{finalequ}
-\D u_n + u_n = [f(x,u_n^2) - \lam_n] u_n + u_n
\end{equation}
that $\Ve u_n\Ve_Y \to \infty \Leftrightarrow \Ve u_n\Ve_X \to \infty$. As for $|u_n|_\infty$, 
by the Sobolev embedding, $|u_n|_\infty \to \infty \Rightarrow \Ve u_n\Ve_X \to \infty$. For the
converse, suppose by contradiction that $\Ve u_n \Ve_X \to \infty$ and 
there is a subsequence $u_j:=u_{n_j}$ such
that $|u_j|_\infty$ is bounded. It follows by Lemma~\ref{expdecay} that 
$\{u_j\}$ is bounded in $Y$ and hence, by \eqref{finalequ}, bounded in $X$, a contradiction. 
This concludes the proof of Theorem~\ref{thm}. $\hfill \Box$

\end{document}